\documentclass[reqno, final,a4paper]{amsart}
\usepackage{color}
\usepackage[colorlinks=true,allcolors=blue
]{hyperref}
\usepackage{amsmath, amssymb, amsthm}
\usepackage{mathrsfs}
\usepackage{mathtools}
\usepackage[noabbrev,capitalize,nameinlink]{cleveref}
\crefname{equation}{}{}
\usepackage{fullpage}
\usepackage[noadjust]{cite}
\usepackage{graphics}
\usepackage{pifont}
\usepackage{tikz}
\usetikzlibrary{calc}
\usepackage{bbm}
\usepackage[T1]{fontenc}
\usetikzlibrary{arrows.meta}

\usepackage{environ}
\usepackage{framed}
\usepackage{url}
\usepackage[linesnumbered,ruled,vlined]{algorithm2e}
\usepackage[noend]{algpseudocode}
\usepackage[labelfont=bf]{caption}
\usepackage{framed}
\usepackage[framemethod=tikz]{mdframed}
\usepackage{pgfplots}
\usepackage{appendix}
\usepackage{graphicx}
\usepackage[textsize=tiny]{todonotes}
\usepackage{tcolorbox}
\usepackage{xcolor}
\usepackage[shortlabels]{enumitem}
\usepackage{colonequals}
\crefformat{enumi}{#2#1#3}
\crefrangeformat{enumi}{#3#1#4 to~#5#2#6}
\crefmultiformat{enumi}{#2#1#3}
{ and~#2#1#3}{, #2#1#3}{ and~#2#1#3}
\allowdisplaybreaks

\let\originalleft\left
\let\originalright\right
\renewcommand{\left}{\mathopen{}\mathclose\bgroup\originalleft}
\renewcommand{\right}{\aftergroup\egroup\originalright}

\newenvironment{poc}{\begin{proof}[Proof of claim]}{\end{proof}}


\crefname{algocf}{Algorithm}{Algorithms}

\crefname{equation}{}{} 
\AtBeginEnvironment{appendices}{\crefalias{section}{appendix}} 

\usepackage[color]{showkeys} 

\colorlet{refkey}{orange!20}
\colorlet{labelkey}{blue!30}

\crefname{algocf}{Algorithm}{Algorithms}

\numberwithin{equation}{section}
\newtheorem{theorem}{Theorem}[section]
\newtheorem{proposition}[theorem]{Proposition}
\newtheorem{lemma}[theorem]{Lemma}
\newtheorem{claim}[theorem]{Claim}
\crefname{claim}{Claim}{Claims}
\crefname{theorem}{Theorem}{Theorems}

\newtheorem{corollary}[theorem]{Corollary}

\newtheorem*{claim*}{Claim}
\newtheorem{fact}[theorem]{Fact}
\crefname{conjecture}{Conjecture}{Conjectures}

\newtheorem{definition}[theorem]{Definition}

\newtheorem*{definition*}{Definition}

\theoremstyle{definition}
\theoremstyle{remark}
\newtheorem{remark}[theorem]{Remark}


\newcommand{\mb}{\mathbb}
\newcommand{\mbf}{\mathbf}

\newcommand{\mc}{\mathcal}

\newcommand{\on}{\operatorname}

\newcommand{\eps}{\varepsilon}

\allowdisplaybreaks
\pgfplotsset{compat=1.18}

\setlength{\textheight}{9.67in}      
\setlength{\textwidth}{6.5in}         

\begin{document}

\title{Relative discrepancy of hypergraphs}

\author[Luong-Le]{Diep Luong-Le}
\address{Vingroup Big Data Institute, 9th floor, Century Tower, Times City, 458 Minh Khai, Vinh Tuy Ward, Hai Ba Trung District, Hanoi, Vietnam.}
\email{dnl2128@columbia.edu}

\author[Tran]{Tuan Tran}
\author[Yang]{Dilong Yang}

\address{School of Mathematical Sciences, University of Science and Technology of China, Hefei, Anhui, China.}
\email{trantuan@ustc.edu.cn}
\email{ydl2001@mail.ustc.edu.cn}

\begin{abstract}
Given $k$-uniform hypergraphs $G$ and $H$ on $n$ vertices with densities $p$ and $q$, their relative discrepancy is defined as $\on{disc}(G,H)=\max\big||E(G')\cap E(H')|-pq\binom{n}{k}\big|$, where the maximum ranges over all pairs $G',H'$ with $G'\cong G$, $H'\cong H$, and $V(G')=V(H')$. Let $\on{bs}(k)$ denote the smallest integer $m \ge 2$ such that any collection of $m$ $k$-uniform hypergraphs on $n$ vertices with moderate densities contains a pair $G,H$ for which $\on{disc}(G,H) = \Omega(n^{(k+1)/2})$.

In this paper, we answer several questions raised by Bollob\'as and Scott, providing both upper and lower bounds for $\on{bs}(k)$.
Consequently, we determine the exact value of $\on{bs}(k)$ for $2\le k\le 13$, and
show $\on{bs}(k)=O(k^{0.525})$, substantially improving the previous bound $\on{bs}(k)\le k+1$ due to Bollob\'as--Scott. The case $k=2$ recovers a result of Bollob\'as--Scott, which generalises classical theorems of Erd\H{o}s--Spencer, and Erd\H{o}s--Goldberg--Pach--Spencer. The case $k=3$ also follows from the results of Bollob\'as--Scott and Kwan--Sudakov--Tran. Our proof combines linear algebra, Fourier analysis, and extremal hypergraph theory. 
\end{abstract}
\maketitle

\section{Introduction}

Discrepancy theory, which traces its roots to Weyl’s early 20th-century work on equidistribution, investigates how far actual configurations deviate from expected patterns within various mathematical structures. Since its inception, it has driven significant advances across a range of disciplines, including measure theory, number theory, computational geometry, optimisation, and theoretical computer science. For a comprehensive overview, we refer the reader to Chazelle’s monograph \cite{Cha00}.

In the discrete setting, Erdős and Spencer \cite{ES71} introduce the notion of {\it hypergraph discrepancy} to quantify deviations in the number of edges in induced subhypergraphs from their expected values. For a $k$-uniform hypergraph $G$ on $n$ vertices with edge density $p=e(G)/\binom{n}{k}$,
its discrepancy is defined as
\[
\on{disc}(G)=\max_{U\subseteq V(G)}\Big| e(G[U])-p\binom{|U|}{k} \Big|.
\]	
They show that when $p=1/2$, the discrepancy satisfies $\on{disc}(G)=\Omega(n^{(k+1)/2})$, which is tight for random $k$-uniform hypergraphs with this density. For graphs, this result is extended to all fixed densities $p \in (0,1)$ by Erd\H{o}s, Goldberg, Pach, and Spencer \cite{EGPS88}, and is further generalised to arbitrary densities for $k \geq 3$ by Bollob\'as and Scott \cite{BS06}. For recent advances in the sparse regime $p = o(1)$, we refer the reader to \cite{RST23,RT24}.

Building on these foundations, Bollob\'as and Scott \cite{BS11} introduce the {\it relative discrepancy of two hypergraphs}, which measures the extent to which the edge sets are independently and uniformly distributed. Given $k$-uniform hypergraphs $G$ and $H$, both on $n$ vertices and with densities $p$ and $q$, their relative discrepancy is defined as
\[
\on{disc}(G,H)=\max\Big||E(G')\cap E(H')|-pq\binom{n}{k}\Big|, 
\]
where the maximum ranges over all pairs $G',H'$ with $G'\cong G$, $H'\cong H$, and $V(G')=V(H')$. 

Bollob\'as and Scott \cite{BS14}, and Ma, Navev, and Sudakov \cite{MNS14} determine the asymptotic of $\on{disc}(G,H)$ when $G$ and $H$ are two random hypergraphs. By selecting $H$ from specific families, one obtains refined measures of the edge distribution of $G$. For example, choosing $H=K_{\lfloor n/2\rfloor,\lceil n/2\rceil}$ recovers the bipartite discrepancy introduced in \cite{EGPS88}. Another notable case arises when $H=H_m$ is the disjoint union of a complete $k$-uniform hypergraph on $m$ vertices and $n-m$ isolated vertices. We have $\on{disc}(G,H_m)=\max_{|U|=m}\big|e(G[U])-p\binom{|U|}{k}\big|$, so that
\begin{equation}\label{eq:pair-to-single}
\on{disc}(G)=\max_{m}\on{disc}(G,H_m).    
\end{equation}

Motivated by the result of Erd\H{o}s, Goldberg, Pach, and Spencer, Bollob\'as and Scott \cite{BS11} conjecture that any pair of $k$-uniform hypergraphs with moderate densities must exhibit large discrepancy. If true, this conjecture would generalise all aforementioned results via formula~\eqref{eq:pair-to-single}. The conjecture is confirmed for graphs \cite{BS06}, but a counterexample is later found for $k=3$ in \cite{BS15}. 

In contrast, the picture for weighted hypergraphs is well-understood. Bollob\'as and Scott \cite{BS15} show that for every $k \geq 1$, any set of $k+1$ nontrivial normalised weighted $k$-uniform hypergraphs contains a pair with relative discrepancy at least $\Omega_k(n^{(k+1)/2})$. Moreover, the number of hypergraphs required is best possible: there exists a set of $k$ nontrivial weighted $k$-uniform hypergraphs in which every pair has discrepancy zero. They ask what happens when one restricts to the (arguably more natural) unweighted case.

This paper answers these questions and reveals that the unweighted setting behaves strikingly differently from the weighted case. For $k\ge 2$, let $\on{bs}(k)$ denote the smallest integer $m\ge 2$ such that any collection $G_1,\ldots,G_m$ of $n$-vertex $k$-uniform hypergraphs with densities in $(\gamma,1-\gamma)$ contains a pair $G_i,G_j$ with $\on{disc}(G_i,G_j)=\Omega_{k,\gamma}(n^{(k+1)/2})$. From the work of Bollob\'as and Scott, it is known that $\on{bs}(2)=2$ and $2\le \on{bs}(k) \le k+1$ for all $k\ge 2$. Our main result provides a good upper bound on $\on{bs}(k)$ in terms of a number-theoretic function. We also prove $\on{bs}(k)\ge 3$ for all $k\ge 3$, thus disproving the conjecture of Bollob\'as and Scott.
Consequently, we obtain $\on{bs}(k)=3$ for all $3\leq k\leq 13$ (already new for $k=3$), and show $\on{bs}(k)=O(k^{0.525})$, substantially improving the previous bound $\on{bs}(k)\le k+1$. 

To present our result, we first introduce a key definition.

\begin{definition}\label{defn:gap}
For an integer $k\ge 2$, let $g(k)$ denote the smallest integer $g\ge 0$ such that for each integer $m\in [2,k]$, the following binary system of linear equations
\[
\sum_{0\le i \le r}(-1)^i\binom{r}{i}\alpha_i=0 \quad \text{for } r=m-g,\ldots,m,
\]
with $\vec{\alpha}=(\alpha_0,\ldots,\alpha_{m})\in \{0,1\}^{m+1}$, has only two solutions $\vec{\alpha}=(0,\ldots,0)$ and $\vec{\alpha}=(1,\ldots,1)$.    
\end{definition} 

Our main result is the following.

\begin{theorem}\label{thm:main}
For any integer $k\ge 2$, 
\[
\min\{k,3\}\le \on{bs}(k)\le g(k)+2.
\]
\end{theorem}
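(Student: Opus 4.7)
The theorem has two inequalities to establish: the lower bound $\on{bs}(k)\ge\min\{k,3\}$ and the upper bound $\on{bs}(k)\le g(k)+2$. For the \textbf{lower bound}, since $\on{bs}(k)\ge 2$ is automatic, I only need to construct, for each $k\ge 3$ and any $\gamma\in(0,1/2)$, a pair of $k$-uniform hypergraphs $G,H$ on $n$ vertices with densities in $(\gamma,1-\gamma)$ and $\on{disc}(G,H)=o(n^{(k+1)/2})$. Building on the $k=3$ counterexample of Bollob\'as--Scott \cite{BS15}, I would look for a $k$-uniform analogue whose Johnson-scheme Fourier expansion $\one_G=\sum_{r=0}^k f_r^G$ is supported on low levels, so that the cross-inner-products $\langle f_r^G,\pi f_r^H\rangle$ remain uniformly small in the shift $\pi\in S_n$. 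A concrete family to try is the \emph{slice hypergraphs} $G=\{e\in\binom{V}{k}:|e\cap A|\in S\}$ for suitable $A\subset V$ and $S\subset\{0,\dots,k\}$: their Johnson-scheme coefficients admit closed forms (dual Hahn polynomials), and by tuning $A,S$ and the analogous parameters for $H$ one can aim to kill the top few Fourier levels while keeping both densities bounded away from $0$ and $1$.

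For the \textbf{upper bound}, set $M=g(k)+2$ and assume, for contradiction, that $G_1,\dots,G_M$ have densities $p_i\in(\gamma,1-\gamma)$ and $\on{disc}(G_i,G_j)=o(n^{(k+1)/2})$ for every $i\ne j$. I would decompose $\one_{G_i}-p_i=\sum_{r=1}^k f_i^{(r)}$ along the Johnson scheme. Averaging $|\langle f_i,\pi f_j\rangle|^2$ over $\pi\in S_n$ --- combined with an averaging over size-$n/2$ vertex subsets in the Erd\H{o}s--Spencer and Bollob\'as--Scott tradition, which is what recovers the sharp $n^{(k+1)/2}$ scale rather than the $n^{k/2}$ afforded by Schur's lemma alone --- converts the small-pair-discrepancy hypothesis into a system of level-wise near-orthogonality constraints on the $f_i^{(r)}$. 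A pigeonhole over this collection of vectors then shows that only a controlled number of hypergraphs can carry large Fourier energy at any fixed Johnson level.

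The heart of the proof is converting these near-orthogonalities into the precise alternating-binomial system of \cref{defn:gap}. My plan is to peel Johnson levels from the top down: at each level $r$ a pigeonhole assigns a $\{0,1\}$-valued signature to each $G_i$ recording whether $f_i^{(r)}$ is large and how it aligns with the others. Inclusion--exclusion on the intersection sizes $\lvert\bigcap_{i\in T}E(\sigma_iG_i)\rvert$ for well-chosen permutation ensembles $\{\sigma_i\}$ then translates the cross-level consistency of these signatures into the identities $\sum_{0\le i\le r}(-1)^i\binom{r}{i}\alpha_i=0$ for $r=m-g(k),\dots,m$ and some $m\in[2,k]$, where $\vec\alpha\in\{0,1\}^{m+1}$ is the extracted signature. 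The binomial weights $\binom{r}{i}$ arise precisely from the $r$-fold inclusion--exclusion counting of coincidence patterns among the intersections. Applying \cref{defn:gap} forces $\vec\alpha\in\{\vec 0,\vec 1\}$; either alternative pins some density $p_i$ to $\{0,1\}$, contradicting $p_i\in(\gamma,1-\gamma)$.

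The main obstacle is this extraction step: turning real-valued Fourier constraints at each Johnson level into the exact alternating-binomial identities of \cref{defn:gap}. Two delicate ingredients are needed: (i) a term-by-term (rather than aggregate) sharpening of the Bollob\'as--Scott variance bound at each level $r$, and (ii) a combinatorial bookkeeping argument matching the inclusion--exclusion coefficients of multi-intersections with the weights $\binom{r}{i}$ in \cref{defn:gap}. I anticipate the argument will proceed by a double induction --- on the number of hypergraphs and on the Johnson level being peeled --- with an invariant recording the $\{0,1\}$-signature extracted so far.
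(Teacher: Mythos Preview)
Your upper-bound strategy has a genuine gap at exactly the point you flag as ``the main obstacle''. In the paper, the binary vector $\vec\alpha\in\{0,1\}^{m+1}$ does \emph{not} arise from comparing several hypergraphs $G_1,\dots,G_M$; it comes from a \emph{single} hypergraph $G$. The argument is: if $\max_{k-g(k)\le r\le k}W^r(G)$ is small, a stability version of an algebraic criterion shows $\mathbf 1_G$ is close to a function of the form $R\mapsto\sum_{S\subset R,\,|S|=\ell-1}h(S)$ with $\ell=k-g(k)$. A robust Fox--Sudakov theorem then locates an induced non-homogeneous $(k,k,k)$-pattern $F$ in $G$ avoiding the bad set; on $F$ the representation holds exactly, and an induction on the number of parts reduces to a bipartite pattern. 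There the $\alpha_i$ are simply the values $\mathbf 1_F(R)$ for $R$ with profile $(i,k-i)$, automatically $\{0,1\}$-valued, and the identities $\sum_i(-1)^i\binom{r}{i}\alpha_i=0$ drop out of the representation $\mathbf 1_F(R)=\sum_{S\subset R}h(S)$ by a binomial computation. Your proposed route---pigeonholing over $G_1,\dots,G_M$ to extract a $\{0,1\}$-signature, then inclusion--exclusion on multi-intersections $|\bigcap_{i\in T}E(\sigma_iG_i)|$---does not produce a vector satisfying those identities: the binomial weights $\binom{r}{i}$ in \cref{defn:gap} count $(\ell-1)$-subsets $S$ of a $k$-set $R$ by their intersection size with one side of a bipartition, which has nothing to do with coincidence patterns among several permuted copies. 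Absent the algebraic criterion and the pattern embedding, there is no mechanism forcing your real-valued level data to collapse to a $\{0,1\}$-vector obeying the exact alternating-binomial system. The pigeonhole step you need is only the trivial one at the very end (two of $g(k)+2$ hypergraphs share a level $r\in[k-g(k),k]$ with $W^r$ large), and all the work goes into showing each single $G$ of moderate density has some large $W^r$ in that range.

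Your lower-bound plan is also off-target. Slice hypergraphs $\{e:|e\cap A|\in S\}$ have explicit Johnson-scheme expansions, but arranging two of them with moderate densities and $\on{disc}=o(n^{(k+1)/2})$ is not at all clear; in fact the paper achieves $\on{disc}(G,H)=0$ exactly by a completely different construction: $G$ is an $(n,k,k-1,\lambda)$-block design (whose existence follows from Keevash/Glock--K\"uhn--Lo--Osthus), and $H$ is the hypergraph of $k$-sets meeting all parts of a balanced $(k-1)$-partition. Since every $(k-1)$-set lies in exactly $\lambda$ edges of $G$, the intersection $|E(G')\cap E(H')|$ is a constant independent of the overlay, giving discrepancy zero.
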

The short proof of the lower bound, presented in \cref{sec:lower-bound}, relies on Keevash's existence theorem for designs \cite{Kee14} as a black box. In contrast, the proof of the upper bound is much more involved (see \cref{sec:outline} for a proof outline).

Interestingly, the quantity $g(k)$ also appears in the work of von zur Gathen and Roche \cite{GR97}, in connection with a question of Mario Szegedy on the Fourier degree of symmetric Boolean functions.
We will make use of the following result from their paper.\footnote{They actually define a function $\Gamma\colon \mb{N}\to \mb{N}\cup\{0\}$, which relates to $g(k)$ via $g(k)=\max_{2\le m\le k}\Gamma(m)$. Part (i) of \cref{thm:Gathen-Roche} is \cite[Table 3]{GR97}.}

\begin{theorem}[\cite{GR97}]\label{thm:Gathen-Roche}\textcolor{white}{}

\begin{itemize}
    \item[\rm (i)] $g(2)=0$, $g(k)=1$ for $3\le k\le 13$, and $g(k)\le 3$ for $14\le k\le 128$.
    \item[\rm (ii)] Let $k\ge 2$, and define $G(k)$ as the maximal gap between an integer $m\in [3,k+1]$ and the largest prime not exceeding $m$. Then, $g(k)\le G(k)$.
\end{itemize}
\end{theorem}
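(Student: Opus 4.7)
The plan is to reformulate the binary system in terms of integer-valued polynomials and then apply a Fermat-style modular argument. Any $\vec\alpha=(\alpha_0,\ldots,\alpha_m)\in\{0,1\}^{m+1}$ extends uniquely to the integer-valued polynomial $f(t)=\sum_{j=0}^m(\Delta^j\alpha_0)\binom{t}{j}$ with $f(i)=\alpha_i$ on $\{0,\ldots,m\}$. Since $\sum_{0\le i\le r}(-1)^i\binom{r}{i}\alpha_i=(-1)^r\Delta^r\alpha_0$, the system in \cref{defn:gap} is equivalent to $\deg f\le m-g-1$. Hence $g(k)=\max_{2\le m\le k}\Gamma(m)$, where $\Gamma(m)=m-d_{\min}(m)$ and $d_{\min}(m)$ is the least degree of a non-constant integer-valued polynomial taking values in $\{0,1\}$ at $0,1,\ldots,m$.

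\textbf{Proof of (ii).} Set $q=p^*(m+1)$ and assume, for contradiction, that $f$ is a non-constant witness of degree $\le q-2$. Then $\Delta^{q-1}f\equiv 0$ as a polynomial, so for every $j=0,1,\ldots,m-q+1$,
\[
0=\Delta^{q-1}f(j)=\sum_{i=0}^{q-1}(-1)^{q-1-i}\binom{q-1}{i}f(j+i).
\]
Primality of $q$ yields $\binom{q-1}{i}\equiv(-1)^i\pmod q$, so modulo $q$ the right-hand side becomes $(-1)^{q-1}\sum_{i=0}^{q-1}f(j+i)$. Since each summand lies in $\{0,1\}$, the sum lies in $\{0,1,\ldots,q\}$, and divisibility by $q$ forces it to equal $0$ or $q$; thus $f(j)=f(j+1)=\cdots=f(j+q-1)$. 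Sliding $j$ from $0$ to $m-q+1$ produces overlapping windows of length $q\ge 2$ whose union is $\{0,\ldots,m\}$, forcing $f$ constant, a contradiction. Hence $\Gamma(m)\le m+1-p^*(m+1)$, and taking the maximum over $m\in[2,k]$ (equivalently over $m'=m+1\in[3,k+1]$) yields $g(k)\le G(k)$.

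\textbf{Proof of (i).} From $G(2)=0$ and (ii) we obtain $g(2)=0$. The lower bound $g(k)\ge 1$ for $k\ge 3$ is witnessed by $\vec\alpha=(0,1,1,0)$, whose Newton polynomial $f(t)=t(3-t)/2$ has degree $2<3$. For the matching upper bound $g(k)\le 1$ on $3\le k\le 13$, part (ii) already supplies $\Gamma(m)\le 1$ for every $m\in[2,13]$ except $m\in\{8,9\}$ (where $p^*(m+1)=7$). For these exceptional values I would augment (ii) by the prime-power identity $p\mid\binom{p^a}{i}$ for $1\le i\le p^a-1$: it implies that $\Delta^{p^a}f(j)=0$ forces $f(j)\equiv f(j+p^a)\pmod p$, which upgrades to an equality $f(j)=f(j+p^a)$ since $f$ is $\{0,1\}$-valued. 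Applied with $p^a\in\{7,2^3,3^2\}$ in the ranges $m\in\{8,9\}$, this yields several periodicity equalities; the remaining free values are pinned down by a brute-force enumeration of at most $2^{10}$ candidate sequences against the integer equations $\Delta^r f(j)=0$. The bound $g(k)\le 3$ for $14\le k\le 128$ follows from the same philosophy—combining periodicities induced by primes and small prime powers near $m$ with a finite case check over the bounded range.

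\textbf{Main obstacle.} The conceptual input is wholly in part (ii), where the Fermat congruence $\binom{q-1}{i}\equiv(-1)^i\pmod q$ reduces integrality of $f$ to a pigeonhole over $\{0,1\}$-values. The real difficulty is the tight bound $g(k)\le 3$ on $[14,128]$: the prime gap between $113$ and $127$ is already $14$, so part (ii) alone yields only $g(k)\le 14$. Tightening this to $3$ requires combining periodicities from several primes and small prime powers simultaneously on intervals where no prime of suitable size exists, and verifying that these constraints pin down only the constant $\{0,1\}$-valued polynomials. I expect this combinatorial orchestration—whether via a sharpened structural theorem on $\{0,1\}$-valued integer polynomials or via careful finite enumeration—to be where the main technical work lies.
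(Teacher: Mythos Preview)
Your proof of (ii) is correct and uses exactly the paper's key step: the congruence $\binom{q-1}{i}\equiv(-1)^i\pmod q$ for a prime $q\le m+1$ forces $\alpha_j=\alpha_{j+1}=\cdots=\alpha_{j+q-1}$, and the remaining values are then determined (the paper applies the congruence only at $j=0$ and then solves $\alpha_p,\ldots,\alpha_m$ sequentially from the equations $r=p,\ldots,m$, whereas you slide the window---these are equivalent). The paper does not prove (i) at all, citing instead \cite[Table~3]{GR97}; your sketch there goes beyond what the paper supplies, is correct where it is fully argued (the lower bound via $\vec\alpha=(0,1,1,0)$ and the cases covered directly by (ii)), and your plan to finish $m\in\{8,9\}$ and the range $14\le k\le 128$ via prime-power periodicities plus a finite check is reasonable but not actually carried out.
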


Since part (ii) of \cref{thm:Gathen-Roche} is not explicitly stated in \cite{GR97}, we include a short proof of it in the appendix. A result of Baker, Harman, and Pintz \cite{BHP01} gives $G(k)=O(k^{0.525})$. Combining this with \cref{thm:main,thm:Gathen-Roche}, we obtain:

\begin{corollary}\label{cor:main-result}\textcolor{white}{}
\begin{itemize}
    \item[\rm (i)] $\on{bs}(2)=2$, $\on{bs}(k)=3$ for $3\le k\le 13$, and $3\le \on{bs}(k)\le 5$ for $14\le k\le 128$.
    \item[\rm (ii)] For every $k\in \mb{N}$, $\on{bs}(k)=O(k^{0.525})$.
\end{itemize}    
\end{corollary}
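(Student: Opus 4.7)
The plan is to derive both parts of the corollary by directly combining \cref{thm:main} with \cref{thm:Gathen-Roche} and the prime-gap estimate of Baker--Harman--Pintz; there is no additional combinatorial content to establish.

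For part (i), I would begin with \cref{thm:main}, which yields the sandwich $\min\{k,3\}\le \on{bs}(k)\le g(k)+2$. When $k=2$, the lower bound reads $\on{bs}(2)\ge 2$ trivially (since $\on{bs}$ is defined as the smallest integer $m\ge 2$), while \cref{thm:Gathen-Roche}(i) gives $g(2)=0$, so the upper bound collapses to $\on{bs}(2)\le 2$, matching the lower bound. For $3\le k\le 13$, the lower bound becomes $\on{bs}(k)\ge 3$, and \cref{thm:Gathen-Roche}(i) yields $g(k)=1$, hence $\on{bs}(k)\le 3$, giving equality. For $14\le k\le 128$, the lower bound is still $\on{bs}(k)\ge 3$, and \cref{thm:Gathen-Roche}(i) supplies $g(k)\le 3$, so the upper bound reads $\on{bs}(k)\le 5$.

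For part (ii), I would invoke the upper bound of \cref{thm:main} together with \cref{thm:Gathen-Roche}(ii) to conclude
\[
\on{bs}(k)\le g(k)+2\le G(k)+2,
\]
where $G(k)$ is the maximal gap between an integer $m\in[3,k+1]$ and the largest prime not exceeding it. The Baker--Harman--Pintz theorem \cite{BHP01} guarantees that for every sufficiently large $m$, there is a prime in the interval $[m-m^{0.525},m]$, so $G(k)=O(k^{0.525})$; for small $k$ the bound is absorbed into the implicit constant. Substituting this estimate yields $\on{bs}(k)=O(k^{0.525})$, as claimed.

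The only non-mechanical step is handling \cref{thm:Gathen-Roche}(ii), whose proof is not in \cite{GR97} and is deferred to the appendix; here I am using it as a black box. Aside from that, the corollary is a routine consequence, and I would present the argument as two short displayed inequalities, one for each part, with references to the three ingredients.
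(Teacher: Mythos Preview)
Your proposal is correct and matches the paper's own derivation exactly: the corollary is stated as an immediate consequence of \cref{thm:main}, \cref{thm:Gathen-Roche}, and the Baker--Harman--Pintz bound $G(k)=O(k^{0.525})$, with no further argument. There is nothing to add.
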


We note that $\on{bs}(3)=3$ also follows from the result of Bollob\'as and Scott \cite[Theorem 3]{BS15}, and of Kwan, Sudakov, and Tran \cite[Lemma 5.1]{KST19}.

\begin{itemize}
\item Based on a probabilistic model of primes, Cram\'er \cite{Cra36} conjectures $\limsup_{k\to \infty}\frac{G(k)}{\log^2 k}=1$. Numerical evidence from prime computations up to $4\cdot 10^{18}$ \cite{EHP14} strongly supports this conjecture: specifically, $\max_{k\le 4\cdot 10^{18}}G(k)/\log^2 k\approx 0.9206$, which is slightly below the value predicted by Cram\'er.
\item  It seems likely that the true upper bound for $g(k)$—and hence for $\on{bs}(k)$—is substantially smaller than $O(\log^2 k)$. For instance, von zur Gathen and Roche \cite{GR97} conjecture $g(k)=O(1)$, which would imply that every nonconstant symmetric Boolean function of $k$ variables has Fourier degree at least $k-\Omega(1)$, and that $\on{bs}(k)=O(1)$.
\end{itemize}

\subsection{Proof overview}\label{sec:outline}
We now outline the proof of \cref{thm:main}. To illustrate the ideas, we introduce the quantity $\on{bs}^*(k)$, which is defined similarly to $\on{bs}(k)$, but replaces $\on{disc}(G_i, G_j) = \Omega(n^{(k+1)/2})$ with the weaker requirement $\on{disc}(G_i, G_j)>0$. Clearly, $\on{bs}^*(k)\le \on{bs}(k)$. To bound $\on{bs}^*(k)$ from above, we use a result of Bollob\'as and Scott \cite{BS15} that relates the relative discrepancy of two hypergraphs to their $W$-vectors. 

Given a $k$-uniform hypergraph $G$ on $n$ vertices, Bollob\'as and Scott define its $W$-vector $(W_1(G), \ldots, W_k(G))$ by
\begin{equation*}
W_r(G)= \frac{1}{n(n-1)\cdots (n-2r+1)\cdot 2^r\binom{n-2r}{k-r}} \cdot \sum_{(a_1,b_1,\ldots,a_r,b_r)} \Big| \sum_{S} (-1)^{|S\cap \{b_1,\ldots,b_r\}|}\deg(S)\Big| \text{\quad for } 1\leq r\leq k,
\end{equation*}
where the outer sum runs over all sequences of $2r$ distinct vertices, and the inner sum runs over all $S\in \binom{V(G)}{r}$ such that $|S\cap \{a_i,b_i\}|=1$ for every $i\in [r]$. 
Here, $\deg(S)$ denotes the number of edges of $G$ containing $S$. Note that $W_r(G)\in [0,1]$ for all $r$.

For any $k$-uniform hypergraphs $G$ and $H$ on the vertex set $[n]$, Bollob\'as and Scott \cite{BS15} prove
\begin{equation*}
\on{disc}(G,H) \ge c_k n^{(k+1)/2}\cdot \max_{1\le r\le k} W_r(G)W_r(H).  
\end{equation*}
Suppose for some $\ell \le k$, every $k$-uniform hypergraph $G$ of moderate density satisfies 
\begin{equation}\label{eq:W-components}
\max_{\ell \le r \le k}W_r(G)>0.    
\end{equation} 
Then, by the pigeonhole principle, 
\[
\on{bs}^*(k) \le k-\ell+2.
\] 
In \cite{BS15}, Bollob\'as and Scott establish \eqref{eq:W-components} with $\ell = 1$, leading to the bound $\on{bs}(k) \le k + 1$. However, it is unclear whether their methods can be extended to prove \cref{eq:W-components} with $\ell = 2$, even for large $k$. On the other hand, to achieve an upper bound of the form $\on{bs}(k)\le O(k^{1-c})$ through \eqref{eq:W-components}, one would need $\ell=k-O(k^{1-c})$.

When $k=3$, Kwan, Sudakov, and Tran \cite{KST19} characterise all moderate-density $3$-uniform hypergraphs $G$ for which $W_3(G)=0$ by using results from extremal hypergraph theory along with a delicate case analysis. This characterisation implies \eqref{eq:W-components} for $k=3$ and $\ell=2$, giving $\on{bs}^*(3)\le 3$. For $k>3$, such a characterisation remains out of reach. 

The key contribution in this paper is a new method to establish \eqref{eq:W-components}. We prove $\on{bs}^*(k) \le k - \ell + 2$ by ruling out the possibility that $W_k(G), \ldots, W_{\ell}(G)$ are all zero. Specifically, we show in \cref{prop:algebraic-criterion} that $\max_{\ell\le r\le k}W_r(G)=0$ if and only if there exists a function $h\colon \binom{V(G)}{\ell-1}\to \mb{R}$ such that for all $R\in \binom{V(G)}{k}$,
\begin{equation*}\label{eq:algebraic-criterion}
\mathbf{1}_{G}(R)=\sum_{S\subset R, \ |S|=\ell-1}h(S),    
\end{equation*}
where $\mathbf{1}_{G}(R)=1$ if $R\in E(G)$ and $0$ otherwise.
This algebraic criterion enables a local-to-global approach. Suppose, for contradiction, that $\max_{\ell\le r\le k}W_r(G)=0$. 
Given that $G$ has moderate density, a result of Fox and Sudakov \cite{FS08} ensures that $G$ contains a large non-homogeneous multipartite pattern $F$ as an induced subhypergraph. (A hypergraph is non-homogeneous if it is neither empty nor complete.) 
By the algebraic criterion, the induced subhypergraph $F\subseteq G$ also satisfies $\max_{\ell\le r\le k}W_r(F)=0$.
To reach a contradiction, we prove by induction on the number of parts of $F$ that $\max_{\ell \le r\le k} W_r(F) > 0$. The base case, where $F$ is a bipartite pattern, naturally introduces the extremal function $g(k)$.

We combine the strategy outlined above with a stability approach to bound $\on{bs}(k)$. In \cref{prop:stability}, we show that if $\max_{\ell \le r \le k} W_r(G) = o(1)$, then there exists a function $\tilde{f} \colon \binom{V(G)}{k} \to \mathbb{R}$ such that 
\begin{itemize}
\item[\rm (i)] $|1_G(R)-\tilde{f}(R)|=o(1)$ for all but at most $o(n^k)$ sets $R\in \binom{V(G)}{k}$, and
\item[\rm (ii)] $\max_{\ell\le r\le k}W_r(\tilde{f})=0$.
\end{itemize}
 Note that the $W$-vectors are not well-suited for establishing this stability result. Instead, we use their $L_2$-based analogue, which facilitates the application of Fourier analysis on the space of multivariate harmonic polynomials. 
 
 Another key ingredient is a robust version of the Fox--Sudakov result (see \cref{prop:robust-unavoidable}), which we prove using Ramsey-type and probabilistic arguments. Now, let $W$ denote the set of $R\in \binom{V(G)}{k}$ for which $|1_G(R)-\tilde{f}(R)|=\Omega(1)$. From property (i), we have $|W|=o(n^k)$. Using the robust Fox-Sudakov result, we find a large non-homogeneous multipartite pattern $F$ as an induced subhypergraph of $G$ that is disjoint from $W$. The algebraic criterion, combined with a continuity argument, then shows $\max_{\ell \le r\le k}W_r(F)=0$. From here, we proceed as before.

It is worth noting that Jain, Kwan, Mubayi, and Tran \cite{JKMT25} use Bollob\'as and Scott's bound, specifically $\max\limits_{1 \leq r \leq k} W_r(G) = \Omega(1)$, among other tools, to investigate anticoncentration of edge-statistics in large hypergraphs. Our approach offers a conceptual proof of this bound (see \cref{cor:BS-L2}).

\subsection{Notation and organisation}
We use standard notation throughout. Let $[n]=\{1,2,\ldots,n\}$, and let $\binom{X}{k}$ denote
the family of all $k$-element subsets of a set $X$. For a $k$-uniform hypergraph $G$, we write $\mathbf{1}_G$ for the indicator function on $\binom{V(G)}{k}$ defined by $\mathbf{1}_G(S) = 1$ if $S \in E(G)$ and $0$ otherwise. If $X$ is a finite set, we write $x \sim X$ to indicate that $x$ is chosen uniformly at random from $X$. For a function $f \colon X \to \mathbb{R}$, we define $\|f\|_{\infty} = \sup_{x \in X} |f(x)|$.

In \cref{sec:W-vectors}, we prove the algebraic criterion and its stability counterpart. While these results are central to the proof of \cref{thm:main}, their application requires a robust extension of the Fox–Sudakov theorem, which we present in \cref{sec:robust-unavoidable}. Finally, in \cref{sec:proofs}, we combine these ingredients with additional combinatorial techniques to complete the proof of \cref{thm:main}.

\subsection*{Acknowledgments} We thank Jaehoon Kim for bringing \cite{GKLO21} to our attention and Hao Huang for helpful discussions related to \cref{lem:pattern}.
Tran was supported by the National Key Research and Development Program of China 2023YFA101020.

\section{\texorpdfstring{The $W$-vectors}{The W-vectors}}\label{sec:W-vectors}
In this section, we define the $W$-vector of a weighted $k$-uniform hypergraph and present its key properties. Given $s\in \mb{N}$ and a set $V$ with $|V|\ge s$, an {\bf $s$-permutation} of $V$ is a sequence of $s$ distinct elements of $V$.

\begin{definition}\label{defn:W-vector}
Let $k,r\in \mb{N}$ and $V$ be a finite set with $|V|\ge 2k \ge 2r$. Given a function $f\colon \binom{V}{k}\to \mb{R}$, the level-$r$ weight of $f$ is defined as
\begin{equation*}\label{eq:W-vector}
W^r(f)=\left(\underset{(a_1,b_1,\ldots,a_r,b_r)}{\mb{E}}\Big(\underset{R}{\mb{E}}\, (-1)^{|R\cap \{b_1,\ldots,b_r\}|}f(R)\Big)^2\right)^{1/2},
\end{equation*}
where $(a_1,b_1,\ldots,a_r,b_r)$ is a uniformly random $2r$-permutation of $V$, and $R$ is a uniformly random $k$-subset of $V$ satisfying $|R\cap \{a_i,b_i\}|=1$ for each $i\in [r]$. We define the level-$r$ weight of a $k$-uniform hypergraph $G$ on the vertex set $V$ as $W^r(G)=W^r(\mathbf{1}_{G})$.
\end{definition}

The following is a direct consequence of the Cauchy-Schwarz inequality, and we state it here for future reference.

\begin{fact}\label{fact}
Let $k,r\in \mb{N}$ and $V$ be a finite set with $|V|\ge 2k \ge 2r$. For any $f,g\colon \binom{V}{k}\to \mb{R}$, 
\[ 
\left|W^r(f)- W^r(g)\right|\le W^r(f-g)\le \| f-g\|_{\infty}.
\]
\end{fact}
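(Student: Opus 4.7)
The main idea is to view $W^r$ as an $L^2$-norm composed with a linear averaging operator, after which the claim becomes a routine consequence of Minkowski's inequality (first bound) and the triangle inequality for expectations (second bound). Specifically, let $\Omega$ denote the set of all $2r$-permutations $(a_1,b_1,\ldots,a_r,b_r)$ of $V$, endowed with the uniform probability measure, and define the linear operator
\[
T \colon \mathbb{R}^{\binom{V}{k}} \to \mathbb{R}^{\Omega}, \qquad (Tf)(a_1,b_1,\ldots,a_r,b_r) = \underset{R}{\mathbb{E}}\,(-1)^{|R\cap\{b_1,\ldots,b_r\}|} f(R),
\]
where $R$ ranges as in \cref{defn:W-vector}. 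With this notation, $W^r(f) = \|Tf\|_{L^2(\Omega)}$.

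For the first inequality, linearity of $T$ gives $T(f-g) = Tf - Tg$, and Minkowski's inequality (a standard consequence of Cauchy--Schwarz) in the Hilbert space $L^2(\Omega)$ yields
\[
|W^r(f) - W^r(g)| \;=\; \bigl|\|Tf\|_2 - \|Tg\|_2\bigr| \;\le\; \|Tf - Tg\|_2 \;=\; W^r(f-g).
\]
For the second inequality, fix any $(a_1,b_1,\ldots,a_r,b_r)\in\Omega$. Since $|(-1)^{|R\cap\{b_1,\ldots,b_r\}|}| = 1$, we get
\[
\bigl|(T(f-g))(a_1,b_1,\ldots,a_r,b_r)\bigr| \;\le\; \underset{R}{\mathbb{E}}\, |(f-g)(R)| \;\le\; \|f-g\|_{\infty}.
\]
Squaring and averaging over $\Omega$ produces $W^r(f-g)^2 \le \|f-g\|_{\infty}^2$, completing the proof.

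\textbf{Main obstacle.} There is essentially none: the result reduces to two textbook inequalities once $W^r$ is recognised as an $L^2$-norm of a linear functional of $f$. The only point requiring a line of verification is that the inner average over $R$ is well-defined for every $(a_1,b_1,\ldots,a_r,b_r)\in\Omega$, which follows from the hypothesis $|V|\ge 2k$.
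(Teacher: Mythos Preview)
Your proof is correct and matches the paper's approach: the paper states this fact as ``a direct consequence of the Cauchy--Schwarz inequality'' without further detail, and your recognition of $W^r$ as an $L^2$-norm of a linear operator followed by the reverse triangle inequality (Minkowski, which in $L^2$ is exactly Cauchy--Schwarz) is precisely the intended argument.
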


We need a few key ingredients to prove \cref{thm:main}. First, we need a lemma essentially due to Bollob\'as and Scott \cite{BS15}.

\begin{lemma}\label{lem:Bollobas-Scott}
Let $G$ and $H$ be $n$-vertex $k$-uniform hypergraphs with $n\ge 2k$. Then
\[
\on{disc}(G,H) \ge c_{\ref{lem:Bollobas-Scott}} n^{(k+1)/2}\cdot \max_{1\le r\le k} (W^r(G)W^r(H))^2,  
\]
where $c_{\ref{lem:Bollobas-Scott}}>0$ depends only on $k$.
\end{lemma}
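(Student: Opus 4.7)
The plan is a second-moment calculation over random bijections, using the Specht-module decomposition of $L^2\bigl(\binom{V}{k}\bigr)$ under $S_n$ to evaluate the moment level-by-level. Set $f_G = \mathbf{1}_G - p$ and $f_H = \mathbf{1}_H - q$. For a uniformly random permutation $\sigma$ of $V = V(G) = V(H)$, let $\sigma G$ denote the copy of $G$ with $\mathbf{1}_{\sigma G}(R) = \mathbf{1}_G(\sigma^{-1}(R))$. Since $\sum f_G = \sum f_H = 0$, a short expansion of $(f_{\sigma G} + p)(f_H + q)$ and summation over $R$ gives
\[
|E(\sigma G) \cap E(H)| - pq\binom{n}{k} = \langle \sigma f_G, f_H \rangle,
\]
so $\on{disc}(G,H)^2 \ge \mathbb{E}_\sigma \langle \sigma f_G, f_H \rangle^2$, and the task reduces to lower-bounding this second moment.

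I would decompose $L^2\bigl(\binom{V}{k}\bigr) = \bigoplus_{r=0}^k V_r$ into Specht modules $V_r \cong S^{(n-r,r)}$, which are pairwise non-isomorphic irreducible $S_n$-representations. Writing $f_G = \sum_r f_G^{(r)}$, $f_H = \sum_r f_H^{(r)}$, Schur's lemma kills cross-terms to yield
\[
\mathbb{E}_\sigma \langle \sigma f_G, f_H \rangle^2 = \sum_{r=0}^k \frac{\|f_G^{(r)}\|_2^2 \, \|f_H^{(r)}\|_2^2}{\dim V_r}.
\]
The form $f \mapsto W^r(f)^2$ is also $S_n$-invariant, so Schur's lemma gives $W^r(f)^2 = \sum_s \mu_{r,s} \|f^{(s)}\|_2^2$. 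The antisymmetry of the inner average under every swap $a_i \leftrightarrow b_i$ annihilates $V_s$ for $s < r$, so $\mu_{r,s} = 0$ there. Evaluating both $\|f\|_2^2$ and $W^r(f)^2$ on the test tabloid $f(R) = \prod_{i=1}^r \bigl(\mathbf{1}[\alpha_i \in R] - \mathbf{1}[\beta_i \in R]\bigr)$ with disjoint pairs (which lies in $V_r$, as one checks by projecting onto $V_s$ for $s < r$) pins down $\mu_{r,r} \asymp_k n^{-(k+r)}$, giving $\|f^{(r)}\|_2^2 \gtrsim_k n^{k+r} W^r(f)^2$.

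Combining these ingredients with $\dim V_r \lesssim_k n^r$ gives, for each $r \ge 1$,
\[
\on{disc}(G,H)^2 \gtrsim_k n^{2k+r} W^r(G)^2 W^r(H)^2 \ge n^{k+1} \bigl(W^r(G) W^r(H)\bigr)^4,
\]
the last step using $W^r \le 1$ and $n^{k+r-1} \ge 1$; taking square roots and maximising over $r$ then yields the lemma. The main obstacle is the eigenvalue computation $\mu_{r,r} \asymp_k n^{-(k+r)}$: Schur's lemma produces the diagonal form with no work, but obtaining the exact scaling in $n$ requires a combinatorial evaluation of $\mathbb{E}_{\vec a, \vec b} L_{\vec a, \vec b}(f)^2$ on the test tabloid, tracking the order of the outer $(\vec a, \vec b)$-sequences that induce a nonzero inner expectation. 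A concrete alternative is to start from the $L^1$ selection argument implicit in \cite{BS15} and apply Cauchy-Schwarz on the outer $(\vec a, \vec b)$-average to convert it into the $L^2$ inequality stated here.
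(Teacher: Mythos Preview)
Your second-moment route has a genuine gap at the eigenvalue step. The inner functional is $L_{\vec A,\vec B}(f)=\bigl(2^r\binom{n-2r}{k-r}\bigr)^{-1}\langle f,\chi_{\vec A,\vec B}\rangle$ with $\chi_{\vec A,\vec B}=\prod_i(x_{a_i}-x_{b_i})\in V_r$, so in fact $\mu_{r,s}=0$ for \emph{all} $s\ne r$ (you only argued $s<r$, which would not by itself give the inequality you need). Taking the trace of the $S_n$-invariant operator $f\mapsto\mathbb E_{\vec A,\vec B}\langle f,\chi_{\vec A,\vec B}\rangle\,\chi_{\vec A,\vec B}$ on $V_r$ then yields the exact value
\[
\mu_{r,r}=\frac{1}{2^r\binom{n-2r}{k-r}\,\dim V_r}\ \asymp_k\ n^{-k},
\]
not $n^{-(k+r)}$. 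With the correct constant your chain becomes
\[
\on{disc}(G,H)^2\ \ge\ \frac{\|f_G^{(r)}\|_2^2\,\|f_H^{(r)}\|_2^2}{\dim V_r}\ \asymp_k\ n^{2k-r}\,W^r(G)^2\,W^r(H)^2,
\]
which at $r=k$ gives only $\on{disc}(G,H)\gtrsim_k n^{k/2}\,W^k(G)W^k(H)$. For random $G,H$ of density $\tfrac12$ one has $W^k(G),W^k(H)\asymp_k 1$, so this misses the target $n^{(k+1)/2}$ by $\sqrt n$. The loss is intrinsic to the step $\on{disc}^2\ge\mathbb E_\sigma\langle\sigma f_G,f_H\rangle^2$: the maximum over $n!$ permutations typically exceeds the root-mean-square, and indeed for random $G,H$ one computes $\mathbb E_\sigma\langle\sigma f_G,f_H\rangle^2=\Theta_k(n^k)$ while $\on{disc}(G,H)^2=\Theta_k(n^{k+1})$.

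The paper takes exactly the ``concrete alternative'' you sketch at the end: it quotes the Bollob\'as--Scott bound $\on{disc}(G,H)\ge c_k n^{(k+1)/2}\max_r W_r(G)W_r(H)$ for their $L^1$ weights $W_r$, and observes that for $\|f\|_\infty\le 1$ the inner average is bounded by $1$ in absolute value, so $W_r(f)=\mathbb E_{\vec A,\vec B}\bigl|L_{\vec A,\vec B}(f)\bigr|\ge\mathbb E_{\vec A,\vec B}L_{\vec A,\vec B}(f)^2=W^r(f)^2$. All the work recovering the extra $\sqrt n$ lives inside \cite{BS15}.
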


\begin{proof}
For any function $f\colon \binom{[n]}{k}\to \mb{R}$, Bollob\'as and Scott \cite{BS15} define\footnote{Strictly speaking, they first
define $W_r(f)$ in a slightly different way and then  
prove that the two definitions are equivalent; see \cite[Lemma~8]{BS15}.}
\[
W_r(f)=\underset{(a_1,b_1,\ldots,a_r,b_r)}{\mb{E}}\Big|\underset{R}{\mb{E}}\, (-1)^{|R\cap \{b_1,\ldots,b_r\}|}f(R)\Big|,
\]
where $(a_1,b_1,\ldots,a_r,b_r)$ is a uniformly random $2r$-permutation of $[n]$, and $R$ is a uniformly random $k$-subset of $[n]$ satisfying $|R\cap \{a_i,b_i\}|=1$ for each $i\in [r]$. Suppose further that $\|f\|_{\infty} \le 1$. Then the inner expectation is bounded by 1 in absolute value, and we obtain
\[
W_r(f)\ge \underset{(a_1,b_1,\ldots,a_r,b_r)}{\mb{E}}\Big|\underset{R}{\mb{E}}\, (-1)^{|R\cap \{b_1,\ldots,b_r\}|}f(R)\Big|^2 = (W^r(f))^2.
\]
Applying this to $f=\mathbf{1}_{G}$ and $f=\mathbf{1}_{H}$, we obtain $W_r(G)\ge (W^r(G))^2$ and $W_r(H)\ge (W^r(H))^2$. Moreover, \cite[Theorem 3]{BS15} shows $\on{disc}(G,H) \ge c_{\ref{lem:Bollobas-Scott}} n^{(k+1)/2}\cdot \max_{1\le r\le k} W_r(G)W_r(H)$.
Combining the two bounds yields the desired result.
\end{proof}

The following result characterises the weighted $k$-uniform hypergraphs $f$ for which $\max_{\ell \le r \le k}W^r(f)=0$. We will prove this in \cref{sec:algebraic-criterion} using rank-based arguments.

\begin{proposition}\label{prop:algebraic-criterion}
Let $k, \ell \in \mb{N}$ and let $V$ be a finite set with $|V| \ge 2k \ge 2\ell$. For any $f \colon \binom{V}{k} \to \mb{R}$, the following are equivalent:
\begin{itemize}
    \item[\rm (i)] $W^k(f) = \cdots = W^{\ell}(f) = 0$;
    \item[\rm (ii)] There exists a unique $h \colon \binom{V}{\ell-1} \to \mb{R}$ such that for all $R \in \binom{V}{k}$, we have $f(R) = \sum_{S \subset R} h(S)$.
\end{itemize}
\end{proposition}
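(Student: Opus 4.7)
The plan is to identify condition (ii) with membership of $f$ in the low-degree part of the classical harmonic decomposition $\mb{R}^{\binom{V}{k}}=V_0\oplus V_1\oplus\cdots\oplus V_k$, where each $V_j$ is an irreducible $S_n$-module (isomorphic to the Specht module $S^{(n-j,j)}$, with $n=|V|\ge 2k$), and $V_0\oplus\cdots\oplus V_j$ is the linear span of the indicators $e_S(R)=\mathbf{1}[S\subseteq R]$ with $|S|\le j$. A standard dimension count shows that the ``lift map'' $h\mapsto\tilde h$ given by $\tilde h(R)=\sum_{S\subseteq R,\,|S|=\ell-1}h(S)$ is injective (using $n\ge 2k\ge k+\ell-1$), so condition (ii) is equivalent to $f\in V_0\oplus\cdots\oplus V_{\ell-1}$, and the $h$ in (ii) is then automatically unique.

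\textbf{An equivariant operator $T_r$.} I introduce the linear operator $T_r\colon\mb R^{\binom{V}{k}}\to \mb R^{\{\text{$2r$-perms of }V\}}$ defined by $(T_rf)(\vec a,\vec b)=\mb E_R\,(-1)^{|R\cap\{b_1,\ldots,b_r\}|}f(R)$; then $W^r(f)=0\iff T_rf\equiv 0$, and $T_r$ is $S_n$-equivariant. The forward direction (ii)$\Rightarrow$(i) is short: writing $f=\tilde h$ gives $T_rf=\sum_{|S|=\ell-1}h(S)\cdot T_r e_S$, and for each $S$ of size $\ell-1<r$ some pair $\{a_i,b_i\}$ is disjoint from $S$, so pairing the valid $R$'s that differ only by the swap $a_i\leftrightarrow b_i$ cancels the inner expectation.

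\textbf{Key injectivity via Schur.} The crux of (i)$\Rightarrow$(ii) is that $T_r|_{V_r}$ is injective. By the forward direction, $T_r$ annihilates $V_0\oplus\cdots\oplus V_{r-1}$, and by irreducibility of $V_r$ together with Schur's lemma, $T_r|_{V_r}$ is either zero or injective. To rule out zero I would compute $T_re_S$ for $|S|=r$: only those $2r$-permutations for which $S$ is a transversal $\{c_1,\ldots,c_r\}$ with $c_i\in\{a_i,b_i\}$ give a nonzero contribution (any pair disjoint from $S$ produces an averaging cancellation, and any pair contained in $S$ forbids $R\supseteq S$), and in that case a direct count yields $(T_re_S)(\vec a,\vec b)=(-1)^{|S\cap\{b_1,\ldots,b_r\}|}/2^r$. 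Since $e_S\in V_0\oplus\cdots\oplus V_r$ and $T_r$ kills the lower summands, $T_re_S=T_r\,\mathrm{proj}_{V_r}(e_S)\ne 0$, so $T_r|_{V_r}$ is nonzero and therefore injective.

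\textbf{Downward induction and main obstacle.} Decompose $f=\sum_{j=0}^k f_j$ with $f_j\in V_j$, and assume $T_rf=0$ for all $r\in\{\ell,\ldots,k\}$. Since $T_k$ kills $V_0\oplus\cdots\oplus V_{k-1}$, we have $T_kf=T_kf_k=0$, and the injectivity of $T_k|_{V_k}$ forces $f_k=0$. Next, $T_{k-1}f=T_{k-1}(f_{k-1}+f_k)=T_{k-1}f_{k-1}$ gives $f_{k-1}=0$; iterating down to $r=\ell$ yields $f_\ell=\cdots=f_k=0$, i.e., $f\in V_0\oplus\cdots\oplus V_{\ell-1}$, which is (ii). I expect the main obstacle to be the bookkeeping in the injectivity step: carefully identifying which $R$'s contribute to $T_re_S$ (pairs intersected vs.\ free, forced signs, and the normalising $1/(2^r\binom{n-2r}{k-r})$ in the expectation) in order to extract the clean nonzero value. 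The representation-theoretic scaffolding is classical but needs to be invoked with care given the paper's Fourier-analytic orientation.
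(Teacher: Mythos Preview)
Your argument is correct. The harmonic decomposition $\mb{R}^{\binom{V}{k}}=V_0\oplus\cdots\oplus V_k$ into Specht modules $S^{(n-j,j)}$ is available precisely because $n\ge 2k$, the identification of (ii) with $f\in V_0\oplus\cdots\oplus V_{\ell-1}$ goes through (the span of $\{e_S:|S|=\ell-1\}$ coincides with the span of $\{e_S:|S|\le \ell-1\}$ since each lower $e_T$ is a multiple of $\sum_{S\supseteq T}e_S$), and your computation of $T_re_S$ for $|S|=r$ is right: the only surviving $2r$-permutations are those for which $S$ is a transversal of the pairs, and then the inner expectation is $(-1)^{|S\cap\{b_1,\ldots,b_r\}|}/2^r$. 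Since $\ker(T_r|_{V_r})$ is a submodule of the simple module $V_r$, this nonzero value forces injectivity, and the downward induction on components is clean.

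The paper takes a different, more elementary route. Instead of invoking the Specht-module decomposition and Schur, it proves the base case $\ell=k$ by an explicit dimension count: using a ballot-sequence parametrisation (due to Frankl--Graham) it exhibits $\binom{n}{k}-\binom{n}{k-1}$ linearly independent functionals $\phi_{\vec A,\vec B}$ that vanish on $X_{k,k}$, matching Gottlieb's lower bound $\dim Y_{k,k}=\binom{n}{k-1}$. It then iterates not on the components $f_j$ but on the function itself: from $W^k(f)=0$ it extracts $h_1\colon\binom{V}{k-1}\to\mb R$ with $f(R)=\sum_{S\subset R}h_1(S)$, checks directly that $W^{k-1}(h_1)=0$, applies the base case again to $h_1$, and so on down to level $\ell-1$. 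Your approach is more conceptual and bypasses the explicit basis construction at the cost of importing representation theory; the paper's is self-contained linear algebra. It is worth noting that the paper does use the harmonic polynomial basis later, for the stability version (\cref{prop:stability}), so your framework is closer to that part of the argument than to the paper's own proof of \cref{prop:algebraic-criterion}.
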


\begin{remark}\textcolor{white}{}\label{rmk:uniqueness-induced}
\begin{enumerate}
    \item If $f\equiv 0$, then by the uniqueness of $h$, we must have $h\equiv 0$.
    \item Suppose $G$ is a $k$-uniform hypergraph with $W^k(G)=\cdots=W^{\ell}(G)=0$. Then, for any induced subhypergraph $F\subseteq G$ with at least $2k$ vertices, $W^k(F)=\cdots=W^{\ell}(F)=0$.
\end{enumerate}
\end{remark}

We also require a stability version of \cref{prop:algebraic-criterion}. The proof, which is deferred to \cref{sec:stability}, is based on an explicit orthogonal basis for the space of multilinear harmonic polynomials.

\begin{proposition}\label{prop:stability}
Let $k,\ell\in \mb{N}$, $\eps>0$, and $V$ be a finite set with $|V|\ge 2k \ge 2\ell$. If $f\colon \binom{V}{k}\to \mb{R}$ satisfies $\max_{\ell \le r \le k}W^r(f)\le 
\eps$, then there exists $\tilde{f}\colon \binom{V}{k}\to \mb{R}$ with the following properties:
\begin{itemize}
    \item[\rm (i)] For all but at most $5^{k/2}\eps^{1/2}\binom{n}{k}$ sets $R\in \binom{[n]}{k}$, we have $|f(R)-\tilde{f}(R)| \le \eps^{1/2}$;
    \item[\rm (ii)] $W^k(\tilde{f})=\cdots=W^{\ell}(\tilde{f})=0$.
\end{itemize}   
\end{proposition}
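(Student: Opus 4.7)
The plan is to apply Fourier analysis on the space of multilinear harmonic polynomials, which naturally diagonalises every $S_n$-invariant bilinear form, including the forms underlying the $W^r$'s. Concretely, one has the orthogonal decomposition
\[
\mb{R}^{\binom{V}{k}} \;=\; \mc{H}_0 \oplus \mc{H}_1 \oplus \cdots \oplus \mc{H}_k,
\]
where $\mc{H}_j$ consists of functions $R\mapsto \sum_{S\subset R,\,|S|=j}\psi(S)$ whose coefficient $\psi\colon\binom{V}{j}\to\mb{R}$ satisfies the harmonicity condition $\sum_{v\in V\setminus S}\psi(S\cup\{v\})=0$ for every $S\in\binom{V}{j-1}$; these subspaces are pairwise orthogonal in $L^2(\binom{V}{k})$. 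Since $\mc{H}_0\oplus\cdots\oplus\mc{H}_{\ell-1}$ coincides with the span of $\{R\mapsto \mathbf{1}[S\subset R] : |S|=\ell-1\}$, \cref{prop:algebraic-criterion} identifies this low-level subspace as precisely the common kernel of $W^\ell,\ldots,W^k$. I would therefore decompose $f=\sum_{r=0}^k f_r$ with $f_r\in\mc{H}_r$ and take $\tilde f:=f_0+\cdots+f_{\ell-1}$; item~(ii) is immediate from \cref{prop:algebraic-criterion} applied to $\tilde f$.

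For item~(i), the key step is to bound $\|f-\tilde f\|_2^2=\sum_{r\ge\ell}\|f_r\|_2^2$. The observation is that $(W^r(f))^2=\langle f,f\rangle_r$, where
\[
\langle f,g\rangle_r \;:=\; \underset{(a_1,b_1,\ldots,a_r,b_r)}{\mb{E}}\Bigl[\Bigl(\underset{R}{\mb{E}}\,(-1)^{|R\cap\{b_1,\ldots,b_r\}|} f(R)\Bigr)\Bigl(\underset{R}{\mb{E}}\,(-1)^{|R\cap\{b_1,\ldots,b_r\}|} g(R)\Bigr)\Bigr]
\]
is a symmetric, positive semidefinite, $S_{|V|}$-invariant bilinear form. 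Since the $\mc{H}_j$'s are pairwise nonisomorphic $S_{|V|}$-irreducibles, Schur's lemma forces $\langle f_j,f_{j'}\rangle_r=0$ for $j\neq j'$ and $\langle f_j,f_j\rangle_r=\mu_{r,j}\|f_j\|_2^2$ for some scalars $\mu_{r,j}\ge 0$. Applying \cref{prop:algebraic-criterion} with $\ell=j+1$ to $f_j\in\mc{H}_j$ gives $W^r(f_j)=0$ for all $r>j$, hence $\mu_{r,j}=0$ whenever $j<r$; applied in the opposite direction with $\ell=r$, it forces $\mu_{r,r}>0$ (otherwise a nonzero element of $\mc{H}_r$ would lie in $\mc{H}_0\oplus\cdots\oplus\mc{H}_{r-1}$). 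One obtains the upper-triangular identity
\[
(W^r(f))^2 \;=\; \sum_{j=r}^{k}\mu_{r,j}\,\|f_j\|_2^2, \qquad \mu_{r,r}>0,\ \mu_{r,j}\ge 0.
\]

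Under the hypothesis $W^r(f)\le\eps$ for $\ell\le r\le k$, back-substitution from $r=k$ downward (using $\mu_{r,j}\ge 0$) yields $\|f_r\|_2^2\le\eps^2/\mu_{r,r}$ for each such $r$, so $\|f-\tilde f\|_2^2\le C_k\eps^2$ with $C_k:=\sum_{r=\ell}^k \mu_{r,r}^{-1}$. Markov's inequality then bounds
\[
\#\bigl\{R\in\tbinom{V}{k} : |f(R)-\tilde f(R)|>\eps^{1/2}\bigr\}\;\le\;\frac{\|f-\tilde f\|_2^2}{\eps}\binom{n}{k}\;\le\; C_k\eps\binom{n}{k};
\]
the claim is trivial when $\eps>1$, so assuming $\eps\le 1$ one has $C_k\eps\le C_k\eps^{1/2}\le 5^{k/2}\eps^{1/2}$ provided $C_k\le 5^{k/2}$. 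The main obstacle is this final numerical step: explicitly computing the eigenvalues $\mu_{r,r}$ via the Johnson-scheme/Krawtchouk-polynomial machinery, with estimates sharp enough to absorb the constant $5^{k/2}$. Everything else---the orthogonal decomposition, the triangular identity, back-substitution, and the single Markov inequality---is routine once \cref{prop:algebraic-criterion} is in hand.
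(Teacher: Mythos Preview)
Your approach is correct and is essentially the same strategy as the paper's: decompose $f$ along the harmonic levels, take $\tilde f$ to be the projection onto levels $0,\ldots,\ell-1$, bound $\|f-\tilde f\|_2^2$ by the $(W^r)^2$, and finish with Markov. The paper does exactly this, the only difference being how the constant is extracted. Rather than invoking Schur's lemma and then computing the Johnson-scheme eigenvalues $\mu_{r,r}$, the paper works directly with Filmus's explicit orthogonal basis $\{\chi_{\vec B}\}$ for $\mc{H}_r$ and proves the inequality $\|f_r\|_2^2\le 4^r\binom{k}{r}(W^r(f))^2$ by Cauchy--Schwarz and a symmetrisation over $S_n$ (\cref{lem:level-r}); summing gives $C_k\le \sum_{r}4^r\binom{k}{r}=5^k$. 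Your abstract route via Schur is a clean way to obtain the upper-triangular identity without touching the basis, but you still need a quantitative lower bound on $\mu_{r,r}$, and the paper's basis argument is one concrete way to supply it.

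One small correction to your final step: you do not need $C_k\le 5^{k/2}$. The conclusion is vacuous whenever $5^{k/2}\eps^{1/2}\ge 1$, i.e.\ for $\eps\ge 5^{-k}$; in the remaining range $\eps<5^{-k}$ one has $C_k\eps\le 5^k\eps\le 5^{k/2}\eps^{1/2}$, so $C_k\le 5^k$ (which is exactly what \cref{lem:level-r} delivers) already suffices. Thus the ``main obstacle'' you flag is milder than stated.
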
 

\subsection{The algebraic criterion}\label{sec:algebraic-criterion}

This section is dedicated to proving \cref{prop:algebraic-criterion}. We begin by deriving it from \cref{lem:base-case,lem:inclusion-matrix}, which
are stated below.

\begin{proof}[Proof of \cref{prop:algebraic-criterion} assuming \cref{lem:inclusion-matrix,lem:base-case}]
Let $X_{k,\ell}$ be the space of functions $f\colon\binom{[n]}{k}\to \mb{R}$ with $W^k(f)=\cdots=W^{\ell}(f)=0$, and let $Y_{k,\ell}$ consist of those $f\colon\binom{[n]}{k}\to \mb{R}$ for which there exists a function  $h\colon \binom{[n]}{\ell-1}\to \mb{R}$ satisfying 
$f(R)=\sum_{S\subset R}h(S)$ for all $R\in \binom{[n]}{k}$. 

We first show $Y_{k,\ell}\subseteq X_{k,\ell}.$ Let $f\in Y_{k,\ell}$ and fix any $r\in \mb{N}$ with $\ell \le r \le k$. Consider any $2r$-permutation $(a_1,b_1,\ldots,a_r,b_r)$, and let $\mc{P}$ be the family of $k$-subsets $R\subset [n]$ such that $|R\cap\{a_i,b_i\}|=1$ for all $i\in [r]$. Consider any set $S\in \binom{[n]}{\ell-1}$. Since $|S|=\ell-1<r$, there exists $i_0\in [r]$ with $S\cap \{a_{i_0},b_{i_0}\}=\emptyset$. The subfamily $\{R\in \mc{P}\colon S\subset R\}$ can be divided into pairs $R,R^*$ differing only in the choice between $a_{i_0}$ and $b_{i_0}$. For each such pair, the values $|R\cap \{b_1,\ldots,b_r\}|$ and $|R^*\cap \{b_1,\ldots,b_r\}|$ differ by exactly one. Therefore, the coefficient of $h(S)$ in the average $\underset{R\sim \mc{P}}{\mb{E}}\, \Big[(-1)^{|R\cap \{b_1,\ldots,b_r\}|}f(R)\Big]$ is
\[
\frac{1}{|\mc{P}|}\sum_{R\in \mc{P}\colon S\subset R}(-1)^{|R\cap \{b_1,\ldots,b_r\}|}=\frac{1}{2|\mc{P}|}\sum_{R,R^*}\left((-1)^{|R\cap \{b_1,\ldots,b_r\}|}+(-1)^{|R^*\cap \{b_1,\ldots,b_r\}|}\right)=0.
\]
Thus, the average vanishes for every $2r$-permutation $(a_1,b_1,\ldots,a_r,b_r)$, implying $W^r(f)=0$. Since $r$ is arbitrary in $[\ell,k]$, we conclude $f\in X_{k,\ell}$, so 
$Y_{k,\ell} \subseteq X_{k,\ell}$.

Next, we prove the reverse inclusion $X_{k,\ell}\subseteq Y_{k,\ell}$. Consider any $f\in X_{k,\ell}$. As $W^k(f)=0$, by \cref{lem:base-case} below, there exists $h_1\colon \binom{[n]}{k-1}\to \mb{R}$ such that $f(R)=\sum_{S_1\subset R}h_1(S_1)$ for all $R\in \binom{[n]}{k}$. Now, by a similar calculation to the one above, we find that for every $2(k-1)$-permutation $(a_1,b_1,\ldots,a_{k-1},b_{k-1})$, 
\[
0=\underset{R}{\mb{E}}\, \Big[(-1)^{|R\cap \{b_1,\ldots,b_{k-1}\}|}f(R)\Big]=\underset{S_1}{\mb{E}}\, \Big[(-1)^{|S_1\cap \{b_1,\ldots,b_{k-1}\}|}h_1(S_1)\Big],
\]
where $R$ and $S_1$ are uniformly random $k$- and $(k-1)$-subsets of $[n]$, respectively, such that
\[
|R \cap \{a_i, b_i\}| = 1 \quad \text{and} \quad |S_1 \cap \{a_i, b_i\}| = 1 \quad \text{for all } i \in [k-1].
\]
This shows $W^{k-1}(h_1)=0$. Proceeding inductively, we eventually obtain a function $h_{k-\ell+1}\colon \binom{[n]}{\ell-1}\to \mb{R}$ satisfying $h_{k-\ell}(S_{k-\ell})=\sum_{S_{k-\ell+1}\subset S_{k-\ell}}h_{k-\ell+1}(S_{k-\ell+1})$ for every $S_{k-\ell}\in \binom{[n]}{\ell}$. Tracing back through the expansion, it follows that for all $R\in \binom{[n]}{k}$,
\[
f(R)=(k-\ell+1)!\sum_{S\subset R}h_{k-\ell+1}(S).
\]
Hence, $f\in Y_{k,\ell}$, so $X_{k,\ell}\subseteq Y_{k,\ell}$.
We conclude $X_{k,\ell}=Y_{k,\ell}$. Finally, the uniqueness of $h$ follows from Gottlieb's result on the rank of higher inclusion matrices (see \cref{lem:inclusion-matrix}).
\end{proof}

In the proof of \cref{prop:algebraic-criterion}, we used the following two results. The first is the base case $\ell=k$ of \cref{prop:algebraic-criterion}.

\begin{lemma}\label{lem:base-case}
Let $n,k\in \mb{N}$ with $n\ge 2k$. A function $f\colon \binom{[n]}{k} \to \mb{R}$ satisfies $W^k(f)=0$ if and only if there exists $h\colon \binom{[n]}{k-1}\to \mb{R}$ such that for all $R\in \binom{[n]}{k}$, we have $f(R)=\sum_{S\subset R}h(S)$.
\end{lemma}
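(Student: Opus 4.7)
I would prove the two implications separately, with the reverse direction being the substantive one.

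For the forward direction, fix any $2k$-permutation $(a_1,b_1,\ldots,a_k,b_k)$ and let $\mc{P}$ denote the family of $k$-subsets $R$ with $|R\cap\{a_i,b_i\}|=1$ for each $i$. For any $S\in\binom{[n]}{k-1}$ lying inside some $R\in\mc{P}$, the condition $|S|=k-1$ forces exactly one pair $\{a_{i_0},b_{i_0}\}$ to be disjoint from $S$; its two elements yield two supersets of $S$ in $\mc{P}$ with opposite parities of $|R\cap\{b_1,\ldots,b_k\}|$. Consequently each $h(S)$ contributes zero to the inner expectation in $W^k(f)$, giving $W^k(f)=0$.

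For the reverse direction, introduce the linear map $T\colon \mb R^{\binom{[n]}{k-1}}\to \mb R^{\binom{[n]}{k}}$ defined by $(Th)(R)=\sum_{S\subset R,\,|S|=k-1}h(S)$, whose image $Y$ is precisely the set of $f$ admitting the desired representation. By Gottlieb's rank theorem (\cref{lem:inclusion-matrix}), $T$ has rank $\binom{n}{k-1}$, so the adjoint $T^\ast\colon g\mapsto\bigl(S\mapsto\sum_{R\supset S}g(R)\bigr)$ satisfies $\dim\ker T^\ast=\binom{n}{k}-\binom{n}{k-1}$, and $Y=(\ker T^\ast)^\perp$ by standard linear algebra. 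On the other hand, $X:=\{f:W^k(f)=0\}$ equals $\on{span}\{\phi_P\}^\perp$, where $\phi_P(R)=(-1)^{|R\cap\{b_1,\ldots,b_k\}|}\one[R\in\mc{P}]$ is indexed by $2k$-permutations $P$; repeating the pair-swap calculation for $\sum_{R\supset S}\phi_P(R)$ shows that each $\phi_P$ lies in $\ker T^\ast$. Hence it suffices to prove the reverse containment $\on{span}\{\phi_P\}\supseteq\ker T^\ast$, for then $X=(\ker T^\ast)^\perp=Y$.

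This spanning claim is the crux. I would prove it using the natural $S_n$-action on $\mb R^{\binom{[n]}{k}}$: the classical Johnson-scheme decomposition $\mb R^{\binom{[n]}{k}}=V_0\oplus\cdots\oplus V_k$ into $S_n$-irreducibles (with $V_j$ corresponding to the partition $(n-j,j)$ and $\dim V_j=\binom{n}{j}-\binom{n}{j-1}$) identifies $\ker T^\ast$ with the top summand $V_k$. Since $V_k$ is irreducible, and $\on{span}\{\phi_P\}$ is a nonzero $S_n$-invariant subspace of $V_k$ — the symmetric group permutes the set of $2k$-permutations, and each $\phi_P$ is manifestly nonzero — irreducibility forces $\on{span}\{\phi_P\}=V_k=\ker T^\ast$. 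The uniqueness of $h$ noted in \cref{rmk:uniqueness-induced}(1) then follows from the injectivity of $T$, itself a consequence of Gottlieb's formula.
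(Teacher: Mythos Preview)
Your proof is correct and takes a genuinely different route from the paper's.

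Both arguments reduce to showing that the $\phi_P$'s span a space of dimension at least $\binom{n}{k}-\binom{n}{k-1}$, but the methods diverge from there. The paper does this explicitly and elementarily: it isolates a special subfamily $\{\phi_{\lambda(\vec{B}),\vec{B}}:\vec{B}\in\mc{B}_{n,k}\}$, counts $|\mc{B}_{n,k}|=\binom{n}{k}-\binom{n}{k-1}$ via Bertrand's ballot theorem, and verifies linear independence by arranging the $\phi$'s in echelon form with respect to a suitable partial order on $\binom{[n]}{k}$. No representation theory is invoked; the input is purely combinatorial. Your argument, by contrast, exploits the $S_n$-module structure: once you observe that $\on{span}\{\phi_P\}$ is a nonzero $S_n$-invariant subspace of $\ker T^\ast$, the identification $\ker T^\ast=V_k$ (which follows from Schur's lemma since $V_k$ has no counterpart in the codomain, together with the dimension match from Gottlieb) and irreducibility of $V_k$ finish the job immediately. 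Your approach is cleaner and more conceptual for readers comfortable with the Johnson scheme decomposition, but it imports a heavier black box; the paper's approach is self-contained and in fact produces an explicit basis of $V_k$ (essentially the Young polytabloid basis), which is reused later in \cref{sec:stability}. Your final sentence about uniqueness is extraneous here, since the lemma as stated does not assert uniqueness of $h$.
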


The second is a result of Gottlieb~\cite{Got66} concerning the rank of higher inclusion matrices, which we state here as a lemma.

\begin{lemma}[\cite{Got66}]\label{lem:inclusion-matrix}
Let $V$ be a finite set and $r,s\in \mb{N}$ with $|V|\ge r\ge s$.  Consider the matrix $M^r_s$ with rows indexed by $\binom{V}{r}$ and columns indexed by $\binom{V}{s}$, where the entry at position $(R,S)$ is 1 if $S\subseteq R$ and 0 otherwise. Then, $\on{rank}(M^r_s) =\min \left\{\binom{n}{r},\binom{n}{s}\right\}$.   
\end{lemma}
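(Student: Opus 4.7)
The plan is to prove the lemma in two stages: a reduction via complementation, followed by a factorisation through consecutive-level inclusion matrices. First, the bijection $R\mapsto V\setminus R$ gives $S\subseteq R\iff V\setminus R\subseteq V\setminus S$, so after relabelling rows and columns $M^r_s$ coincides with $M^{n-s}_{n-r}$, and in particular $\on{rank}(M^r_s)=\on{rank}(M^{n-s}_{n-r})$. Since at least one of the pairs $(r,s)$ and $(n-s,n-r)$ has entries summing to at most $n$, it suffices to treat the subcritical case $r+s\le n$, in which $\binom{n}{s}\le\binom{n}{r}$, and to prove that the columns of $M^r_s$ are linearly independent.

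In this regime, I would first establish the multiplicative identity $M^{j}_{i}M^{i}_{h}=\binom{j-h}{i-h}M^{j}_{h}$ for $j\ge i\ge h$, which follows by double-counting chains $S\subseteq T\subseteq R$. Iterating it down the ladder $r,r-1,\dots,s$ yields
\[
(r-s)!\,M^r_s=M^r_{r-1}M^{r-1}_{r-2}\cdots M^{s+1}_s.
\]
Because a product of matrices of full column rank again has full column rank, the problem reduces to showing that every consecutive-level matrix $M^{i+1}_i$ has full column rank $\binom{n}{i}$ whenever $n\ge 2i+1$---a condition guaranteed for $s\le i\le r-1$ by the hypothesis $r+s\le n$.

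For this last step I would verify positive-definiteness of the Gram matrix $(M^{i+1}_i)^{T}M^{i+1}_i$ by a direct count: its $(S_1,S_2)$-entry is the number of $(i+1)$-sets containing $S_1\cup S_2$, which equals $n-i$ when $S_1=S_2$, equals $1$ when $|S_1\triangle S_2|=2$, and vanishes otherwise. Thus
\[
(M^{i+1}_i)^{T}M^{i+1}_i=(n-i)I+A_{J(n,i)},
\]
where $A_{J(n,i)}$ is the adjacency matrix of the Johnson graph $J(n,i)$. The classical Johnson-scheme spectrum identifies its eigenvalues as $(i-j)(n-i-j)-j$ for $0\le j\le i$, so those of the Gram matrix are $(n-i)-j+(i-j)(n-i-j)\ge n-2i\ge 1$, whence it is positive definite.

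The main obstacle is the Johnson-scheme spectral computation; while classical, it genuinely uses the structure of the underlying association scheme. A pleasingly self-contained alternative would be to diagonalise the Gram matrix by building an explicit orthogonal basis of multilinear harmonic polynomials on $\binom{V}{i}$---the Fourier-analytic toolkit already deployed in \cref{sec:stability}---and read off positivity from the diagonal entries.
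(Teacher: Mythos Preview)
The paper does not supply its own proof of this lemma; it is quoted from Gottlieb and used as a black box. Your complementation reduction and the identity $M^{j}_{i}M^{i}_{h}=\binom{j-h}{i-h}M^{j}_{h}$ are both correct, and the spectral analysis of $(M^{i+1}_i)^{T}M^{i+1}_i$ via the Johnson graph is valid when $n\ge 2i+1$.

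The factorisation step, however, contains a genuine gap. You assert that $r+s\le n$ guarantees $n\ge 2i+1$ for every $s\le i\le r-1$, but this fails whenever $s\le r-2$. Take $n=4$, $r=3$, $s=1$: then $r+s=n$, yet for $i=r-1=2$ one would need $n\ge 5$, and indeed $M^{3}_{2}$ is a $4\times 6$ matrix of rank only $4$, so it is not injective. The product $M^{3}_{2}M^{2}_{1}=2M^{3}_{1}$ is still invertible, but that is exactly what must be shown --- the consecutive-level factorisation does not reduce the problem, since for $(n-1)/2<i\le r-1$ the factor $M^{i+1}_i$ has nontrivial kernel, and arguing that this kernel misses the image of the earlier factors is essentially the original statement. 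A standard repair is to analyse the full Gram matrix $(M^r_s)^{T}M^r_s$ in one stroke: its $(S_1,S_2)$-entry is $\binom{n-|S_1\cup S_2|}{r-|S_1\cup S_2|}$, which places it in the Bose--Mesner algebra of $J(n,s)$, and a single spectral computation at level $s$ (rather than at every intermediate level) yields positive-definiteness for $r+s\le n$.
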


The rest of this section is devoted to the proof of \cref{lem:base-case}. We begin with a useful notation.

\begin{definition}\label{defn:top-sets}
Let $n,r\in \mb{N}$ with $n\ge 2r$. Two $r$-permutations $\vec{A},\vec{B}$ of $[n]$ are said to be disjoint if no element appears in both $\vec{A}$ and $\vec{B}$. For disjoint $r$-permutations $\vec{A}=(a_1,\ldots,a_r)$ and $\vec{B}=(b_1,\ldots,b_r)$, we write $\vec{A}\prec \vec{B}$ if
\[
b_1<\cdots<b_r \enskip \text{and $a_i<b_i$ for every $i\in [r]$.}
\]
Let $\mc{B}_{n,r}$ denote the set of all $r$-permutations $\vec{B}$ such that $\vec{A}\prec \vec{B}$ for some $r$-permutation $\vec{A}$.     
\end{definition}

We are now ready to prove \cref{lem:base-case}.

\begin{proof}[Proof of \cref{lem:base-case}]
Using the notation from the proof of \cref{prop:algebraic-criterion}, we aim to show $X_{k,k}=Y_{k,k}$. From that proof, we know $Y_{k,k}\subseteq X_{k,k}$, and \cref{lem:inclusion-matrix} gives $\dim Y_{k,k}=\binom{n}{k-1}$.
Thus, the lemma follows if we show $\dim X_{k,k} \le \binom{n}{k-1}$, which we now proceed to prove.

Given $\vec{A}=(a_1,\ldots,a_k)$ and $\vec{B}=(b_1,\ldots,b_k)$ with $\vec{A}\prec \vec{B}$, define the function $\phi_{\vec{A},\vec{B}}\colon\binom{[n]}{k}\to \mb{R}$ by setting $\phi_{\vec{A},\vec{B}}(R) = 
(-1)^{|R \cap \{b_1, \ldots, b_k\}|}$ if $|R \cap \{a_i, b_i\}| = 1$ for every $i \in [k]$, and $\phi_{\vec{A},\vec{B}}(R) =0$ otherwise.
Note that $\langle f,\phi_{\vec{A},\vec{B}}\rangle=0$ for every $f\in X_{k,k}$. 

\begin{claim}[\cite{FG89}]\label{claim:independent-set}
 For each $\vec{B}\in \mc{B}_{n,k}$, let $\lambda(\vec{B})$ be any $k$-permutation satisfying $\lambda(\vec{B})\prec \vec{B}$. Then the set $\{\phi_{\lambda(\vec{B}),\vec{B}}\colon \vec{B}\in \mc{B}_{n,k}\}$ is linearly independent and has size $|\mc{B}_{n,k}|=\binom{n}{k}-\binom{n}{k-1}$. 
\end{claim}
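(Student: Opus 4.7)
The plan is to prove linear independence via a triangular-matrix argument using the natural pivot $R_{\vec B}:=B=\{b_1,\dots,b_k\}$ and the colexicographic order on the underlying sets, and then to compute $|\mc{B}_{n,k}|$ by Hall's theorem followed by a reflection count.

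First I order $\mc{B}_{n,k}$ by declaring $\vec{B}'\succ \vec{B}$ iff $\max(B\triangle B')\in B'$, and consider the matrix $M_{\vec{B},\vec{B}'}:=\phi_{\lambda(\vec{B}),\vec{B}}(B')$. The diagonal entry $\phi_{\lambda(\vec{B}),\vec{B}}(B)=(-1)^k$ is immediate, since $B$ picks every $b_i$ (and no $a_i$) from its pair, so the diagonal is nonzero.

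The crux is proving upper triangularity: $\phi_{\lambda(\vec{B}),\vec{B}}(B')=0$ whenever $\vec{B}'\succ \vec{B}$. I would argue by contradiction. If $B'$ lies in the support of $\phi_{\lambda(\vec{B}),\vec{B}}$, then $|B'\cap\{a_i,b_i\}|=1$ for each $i\in[k]$, and the size equality $|B'|=k$ pigeonholes $B'$ into $A\cup B$. Setting $m:=\max(B\triangle B')$, the colex hypothesis forces $m\in B'\setminus B\subseteq A$, so $m=a_{i_0}$ for some $i_0\in[k]$. But then $b_{i_0}>a_{i_0}=m$ places $b_{i_0}$ strictly above $\max(B\triangle B')$, so $b_{i_0}\notin B\triangle B'$; together with $b_{i_0}\in B$ this gives $b_{i_0}\in B\cap B'$, contradicting the fact that $B'$ already picked $a_{i_0}$ out of the pair $\{a_{i_0},b_{i_0}\}$. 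Nonsingularity of the resulting triangular matrix then yields linear independence of the $\phi_{\lambda(\vec{B}),\vec{B}}$'s.

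For the cardinality, I would identify $\vec{B}\in\mc{B}_{n,k}$ with its underlying $k$-subset $B=\{b_1<\cdots<b_k\}$ and view the existence of $\vec{A}\prec\vec{B}$ as asking for a system of distinct representatives in the bipartite graph with parts $[k]$ and $[n]\setminus B$, where $i$ is adjacent to $[1,b_i-1]\setminus B$. Hall's condition applied to the intervals $I=\{1,\dots,j\}$, combined with monotonicity in the largest index of $I$, shows that such an SDR exists iff $b_j\ge 2j$ for every $j\in[k]$; a classical reflection then counts such $k$-subsets as $\binom{n}{k}-\binom{n}{k-1}$. I expect the main obstacle to be the triangularity step, where the pivot and the order have to be aligned so the chain $B'\subseteq A\cup B\Rightarrow m\in A\Rightarrow b_{i_0}\in B\cap B'$ closes; once $R_{\vec B}=B$ and the colex order are in place, the rest is routine.
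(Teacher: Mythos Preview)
Your proof is correct and follows essentially the same approach as the paper's: both establish linear independence by showing the matrix $(\phi_{\lambda(\vec B),\vec B}(R))$ is in echelon form with pivot at $R=B$ (the paper uses the dominance partial order $\{a_1<\cdots<a_k\}\preceq\{b_1<\cdots<b_k\}$ iff $a_i\le b_i$, of which your colex order is a linear extension), and both compute $|\mc B_{n,k}|$ by first characterising membership as $b_j\ge 2j$ for all $j$ and then invoking a ballot/reflection count. The only cosmetic difference is that the paper reaches the characterisation via a direct $\pm1$-sequence encoding and Bertrand's ballot theorem, whereas you route it through Hall's condition on the nested sets $[1,b_i-1]\setminus B$; the two are equivalent.
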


\begin{poc}
For completeness, we include a proof here. We begin by showing $|\mc{B}_{n,k}|=\binom{n}{k}-\binom{n}{k-1}$. Encode each $k$-permutation $\vec{B}=(b_1,\ldots,b_k)$ of $[n]$ as a sequence $s_0, \ldots, s_n$ of $\pm 1$ values. Set $s_0 = 1$, and for $i \in [n]$, let $s_i =1$ if $i \notin \{b_1,\ldots,b_k\}$, and $s_i=-1$ if $i\in \{b_1,\ldots,b_k\}$. It is not difficult to see that $\vec{B}\in \mc{B}_{n,k}$ if and only if all the partial sums $s_0,s_0+s_1,\ldots,s_0+\cdots+s_n$ are positive. Moreover, each such sequence $s_0, \ldots, s_n$ contains $k$ entries equal to $-1$ and $n-k+1$ entries equal to 1. By Bertrand's ballot theorem, the number of such sequences is $\frac{n-2k+1}{n+1} \binom{n+1}{k}$. Thus,
$|\mc{B}_{n,k}|=\frac{n-2k+1}{n+1} \binom{n+1}{k}=\binom{n}{k}-\binom{n}{k-1}$. 

Next, we show that $\{\phi_{\vec{A},\vec{B}}\colon \vec{B}\in \mc{B}_{n,k}\}$ is a linearly independent set. Define a partial order on $\binom{[n]}{k}$ as follows: $\{a_1,\ldots,a_k\}\preceq \{b_1,\ldots,b_k\}$ if $a_1<\cdots<a_k$, $b_1<\cdots<b_k$, and $a_i\le b_i$ for every $i\in [k]$. Note that for any $k$-permutations $\vec{A}=(a_1,\ldots,a_k)$ and $\vec{B}=(b_1,\ldots,b_k)$ with $\vec{A}\prec \vec{B}$, if $\phi_{\vec{A},\vec{B}}(R)\ne 0$, then $R\preceq \{b_1,\ldots,b_k\}$. Consider the matrix representing the set $\{\phi_{\vec{A},\vec{B}}\colon \vec{B}\in \mc{B}_{n,k}\}$, where the columns are indexed by $\binom{[n]}{k}$ arranged in an order compatible with $\preceq$. This matrix is in echelon form, so it has full rank. Therefore, the set $\{\phi_{\vec{A},\vec{B}}\colon \vec{B}\in \mc{B}_{n,k}\}$ is linearly independent.
\end{poc}

Since $\langle f,\phi_{\vec{A},\vec{B}}\rangle=0$ for every $f\in X_{k,k}$, it follows from \cref{claim:independent-set} that
\[
\dim X_{k,k} \le \binom{n}{k}-\operatorname{rank}\{\phi_{\vec{A},\vec{B}}\colon \vec{A}\prec \vec{B}\} \le \binom{n}{k-1},
\] 
completing the proof of \cref{lem:base-case}.
\end{proof}

\subsection{The stability result}\label{sec:stability}
In this section, we prove \cref{prop:stability}. For integers $0\le k\le n$, let $\on{Slice}(n,k)$ denote the subset of $\{0,1\}^n$ that consists of vectors with exactly $k$ ones. Our argument involves extending a function defined on $\on{Slice}(n,k)$ to a function on $\mathbb{R}^n$. In this context, the appropriate extension is to interpret the function as a {\em harmonic multilinear polynomial}. 
Following Dunkl \cite{Dun76}, we say a multilinear polynomial $P\in \mb{R}[x_1,\ldots,x_n]$ is {\em harmonic} if
\[
\sum_{i=1}^n \frac{\partial P}{\partial x_i}=0.
\]
We denote by $\mc{H}'_{n,r}$ the vector space of harmonic multilinear polynomials in $x_1,\ldots,x_n$ of degree $r$.

We need a few key ingredients to prove \cref{prop:stability}. First, we require an explicit orthogonal basis for the space of harmonic multilinear polynomials, constructed by Srinivasan \cite{Sri11} and Filmus \cite{Fil16}. To describe this basis, we use some notation from \cite{Fil16}. The inner product of two functions $f, g\colon \on{Slice}(n,k) \to \mathbb{R}$ is defined by $\langle f, g \rangle = \underset{\vec{\sigma}}{\mb{E}} [f(\vec{\sigma})g(\vec{\sigma})]$, where $\vec{\sigma}\sim \on{Slice}(n,k)$. The norm of a function $f\colon \on{Slice}(n,k)\to \mb{R}$ is then given by $\|f\|= \sqrt{\langle f, f \rangle }$. Recall the notation from \cref{defn:top-sets}. If $\vec{A}=(a_1,\ldots,a_r)$ and $\vec{B}=(b_1,\ldots,b_r)$ are disjoint, define $\chi_{\vec{A},\vec{B}}=\prod_{i=1}^r(x_{a_i}-x_{b_i})$, a multilinear polynomial in  $\mb{R}[x_1,\ldots,x_n]$. For $\vec{B}\in \mc{B}_{n,r}$, define $\chi_{\vec{B}}=\sum\limits_{\vec{A}\prec \vec{B}}\chi_{\vec{A},\vec{B}}$, with $\chi_{\emptyset}=1$.
Finally, set $\chi_r=\prod_{i=1}^r(x_{2i-1}-x_{2i})$,
with the convention that $\chi_0=1$.

\begin{lemma}[{\cite[Theorems 3.1, 3.2 and 4.1]{Fil16}}] \label{lem:explicit-basis} The set $\{\chi_{\vec{B}}:\vec{B}\in \mc{B}_{n,r}\}$ is an orthogonal basis of $\mathcal{H}'_{n,r}$ with respect to the uniform measure on the slice $\on{Slice}(n,k)$. The subspaces $\mc{H}'_{n,r}$ for different $r$ are mutually orthogonal. For any $\vec{B}=(b_1,\ldots,b_r)\in \mc{B}_{n,r}$, we have $\|\chi_{\vec{B}}\|^2=c(\vec{B})\|\chi_r\|^2$, where $c(\vec{B})=\prod_{i=1}^r\binom{b_i-2i+2}{2}$ and $\|\chi_r\|^2=\frac{2^r\binom{n-2r}{k-r}}{\binom{n}{k}}$. 
\end{lemma}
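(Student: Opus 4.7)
The strategy is to prove the three claims---harmonicity together with the basis property, orthogonality between different levels, and the explicit norm formula---in that order, relying on the action of $S_n$ permuting coordinates.

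First, I would verify that each $\chi_{\vec{A},\vec{B}}=\prod_{i=1}^r(x_{a_i}-x_{b_i})$ belongs to $\mc{H}'_{n,r}$: multilinearity of degree $r$ is immediate since $\vec{A}$ and $\vec{B}$ are disjoint, and
\[
\sum_{i=1}^n \frac{\partial \chi_{\vec{A},\vec{B}}}{\partial x_i}=\sum_{j=1}^{r}\left(\frac{\partial}{\partial x_{a_j}}+\frac{\partial}{\partial x_{b_j}}\right)\chi_{\vec{A},\vec{B}}=0
\]
because the two summands cancel for each $j$. Summing over $\vec{A}\prec\vec{B}$ gives $\chi_{\vec{B}}\in\mc{H}'_{n,r}$. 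To see that $\{\chi_{\vec{B}}\colon \vec{B}\in\mc{B}_{n,r}\}$ is a basis of $\mc{H}'_{n,r}|_{\on{Slice}(n,k)}$, I would invoke the classical decomposition of the permutation module $\mb{R}^{\on{Slice}(n,k)}$ under $S_n$ into the Specht modules $S^{(n-r,r)}$ for $0\le r\le \min(k,n-k)$, identifying $\mc{H}'_{n,r}|_{\on{Slice}(n,k)}$ as the $S^{(n-r,r)}$-isotypic component. Young's formula then gives its dimension as $\binom{n}{r}-\binom{n}{r-1}$, which equals $|\mc{B}_{n,r}|$ by the ballot-sequence count in the proof of \cref{claim:independent-set} (the argument there for $r=k$ extends verbatim to arbitrary $r$). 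Linear independence of the $\chi_{\vec{B}}$'s on the slice then follows by the same echelon argument used in that claim: after picking some $\lambda(\vec{B})\prec\vec{B}$ for each $\vec{B}$, the evaluations $\chi_{\lambda(\vec{B}),\vec{B}}(\mathbf{1}_R)$ are supported on $k$-sets $R$ whose intersection with $\{1,\ldots,b_r\}$ is dominated by $\{b_1,\ldots,b_r\}$ in a suitable componentwise order, producing a triangular matrix with nonzero diagonal.

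Between-level orthogonality is then immediate from Schur's lemma, since the uniform measure on $\on{Slice}(n,k)$ is $S_n$-invariant and the irreducibles $S^{(n-r,r)}$ for distinct $r$ are pairwise nonisomorphic.

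The main obstacle is the remaining step: within-level orthogonality together with the norm formula $\|\chi_{\vec{B}}\|^2=c(\vec{B})\|\chi_r\|^2$. The norm of $\chi_r$ is a direct count: $\chi_r(\vec{\sigma})^2=1$ iff $\sigma_{2i-1}\ne\sigma_{2i}$ for every $i\in [r]$, giving $2^r\binom{n-2r}{k-r}$ favourable $\vec{\sigma}\in \on{Slice}(n,k)$ and hence $\|\chi_r\|^2=2^r\binom{n-2r}{k-r}/\binom{n}{k}$. For the general case, I would induct on $r$. Writing $\wh{\vec{B}}=(b_1,\ldots,b_{r-1})$ and isolating the outermost factor $(x_{a_r}-x_{b_r})$ would express $\chi_{\vec{B}}$ as a weighted sum of $\chi_{\wh{\vec{B}}}$-like quantities indexed by the admissible choices of $a_r$. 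Conditioning the inner product $\langle \chi_{\vec{B}},\chi_{\vec{B}'}\rangle$ on the coordinates $\sigma_{b_r},\sigma_{b_r'}$ and on the indices playing the role of $a_r,a_r'$ would reduce it to a linear combination of inner products for the truncated tuples $\wh{\vec{B}},\wh{\vec{B}'}$, with explicit combinatorial prefactors; the induction hypothesis would then kill the off-diagonal terms, while the diagonal contribution from the $\binom{b_r-2(r-1)}{2}=\binom{b_r-2r+2}{2}$ pairs $(a_r,a_r')$ contributing nontrivially would supply the missing factor in $c(\vec{B})$. The main technical hurdle is the bookkeeping required to preserve the ballot condition under truncation and to verify that the cross-terms between different admissible values of $a_r$ telescope cleanly; an alternative via Young's seminormal form, whose Gelfand--Tsetlin basis of $S^{(n-r,r)}$ is indexed by standard Young tableaux of shape $(n-r,r)$ in natural bijection with $\mc{B}_{n,r}$, would likely yield a conceptually cleaner proof at the cost of invoking more representation-theoretic machinery.
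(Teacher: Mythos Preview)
The paper does not prove this lemma: it is quoted verbatim from Filmus \cite{Fil16} (Theorems 3.1, 3.2 and 4.1 there) and used as a black box. There is therefore no ``paper's own proof'' to compare against; your proposal goes well beyond what the paper does.

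As for the viability of your sketch: the first three items (harmonicity, dimension $=|\mc{B}_{n,r}|$ via the Specht decomposition of $\mb{R}^{\on{Slice}(n,k)}$, and between-level orthogonality by Schur) are standard and correct. The within-level step is where the actual content lies, and your outline there is honest but not yet a proof. The inductive reduction you describe---peeling off the last factor $(x_{a_r}-x_{b_r})$ and conditioning on $\sigma_{b_r},\sigma_{b_r'}$---does not straightforwardly reduce $\langle \chi_{\vec{B}},\chi_{\vec{B}'}\rangle$ to inner products of $\chi_{\wh{\vec{B}}},\chi_{\wh{\vec{B}}'}$ on a smaller slice, because after fixing those coordinates you are no longer on a single slice and the remaining sums over $a_r,a_r'$ interact with $\chi_{\wh{\vec{B}}}$ in a way that does not obviously decouple. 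Filmus's actual argument does proceed by induction on $r$, but the key identity is a recursion expressing $\chi_{\vec{B}}$ in terms of $\chi_{\vec{B}'}$ for shorter $\vec{B}'$ together with a carefully chosen auxiliary family, and the orthogonality and norm computation are carried out simultaneously. Your alternative via Young's seminormal form is closer to what Srinivasan does and would work, but then the identification of the Gelfand--Tsetlin basis with the specific polynomials $\chi_{\vec{B}}$ (as opposed to some abstract orthogonal basis) still requires work. In short: the strategy is sound, but the hard step is genuinely hard and your sketch does not yet contain the mechanism that makes the cross-terms vanish.
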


The following is an immediate consequence of \cref{lem:explicit-basis}, which we will use.

\begin{corollary}\label{cor:orthogonal-basis}
Let $P\in \mb{R}[x_1,\ldots,x_n]$ be a harmonic multilinear polynomial of degree $d$. Then, with respect to the uniform measure on $\on{Slice}(n,k)$, $P$ admits an orthogonal decomposition $P=P^{=0}+\cdots+P^{=d}$, where $P^{=r}$ denotes the homogeneous degree-$r$ component of $P$. Moreover, for each $r$, we have 
\[
P^{=r}=\sum_{\vec{B}\in \mc{B}_{n,r}}\frac{\langle P, \chi_{\vec{B}} \rangle}{\|\chi_{\vec{B}}\|^2}\cdot \chi_{\vec{B}} \enskip \text{and} \enskip \|P^{=r}\|^2 = \sum_{\vec{B} \in \mathcal{B}_{n,r}} \frac{\langle P, \chi_{\vec{B}} \rangle^2}{c(\vec{B}) \|\chi_r\|^2},
\]
where $\|\chi_r\|^2=\frac{2^r\binom{n-2r}{k-r}}{\binom{n}{k}}$ and for each $\vec{B}=(b_1,\ldots,b_r)\in \mc{B}_{n,r}$, we define $c(\vec{B})=\prod_{i=1}^r\binom{b_i-2i+2}{2}$. 
\end{corollary}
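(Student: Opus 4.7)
The statement is a direct corollary of \cref{lem:explicit-basis}, so the plan is essentially a bookkeeping exercise using the orthogonal basis supplied there. Let me sketch the three-step approach.

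First, I would verify that the homogeneous components $P^{=r}$ of a harmonic multilinear polynomial $P$ are themselves harmonic. This is the only nontrivial point. Writing $P=\sum_{r=0}^{d}P^{=r}$ and computing $\sum_{i=1}^n \partial P/\partial x_i = \sum_{r=0}^{d}\sum_{i=1}^n \partial P^{=r}/\partial x_i$, I observe that $\sum_i \partial P^{=r}/\partial x_i$ is homogeneous of degree $r-1$. Since the total sum vanishes by hypothesis and distinct-degree homogeneous polynomials are linearly independent, each homogeneous summand must vanish separately, giving $P^{=r}\in \mc{H}'_{n,r}$ for every $r$.

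Next, I would use the orthogonality of the subspaces $\mc{H}'_{n,r}$ (different $r$) from \cref{lem:explicit-basis} to conclude that $P=P^{=0}+\cdots+P^{=d}$ is an orthogonal decomposition with respect to the inner product on $\on{Slice}(n,k)$. Within each fixed $\mc{H}'_{n,r}$, \cref{lem:explicit-basis} provides the orthogonal basis $\{\chi_{\vec{B}}:\vec{B}\in \mc{B}_{n,r}\}$, so the standard Fourier expansion gives
\[
P^{=r}=\sum_{\vec{B}\in \mc{B}_{n,r}}\frac{\langle P^{=r}, \chi_{\vec{B}} \rangle}{\|\chi_{\vec{B}}\|^2}\cdot \chi_{\vec{B}}.
\]
Because $\chi_{\vec{B}}\in \mc{H}'_{n,r}$ is orthogonal to every $P^{=r'}$ with $r'\ne r$, I can replace $\langle P^{=r},\chi_{\vec{B}}\rangle$ by $\langle P,\chi_{\vec{B}}\rangle$, which gives the first displayed formula.

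Finally, the norm formula follows immediately from Parseval on the orthogonal basis:
\[
\|P^{=r}\|^2 = \sum_{\vec{B}\in \mc{B}_{n,r}} \frac{\langle P, \chi_{\vec{B}} \rangle^2}{\|\chi_{\vec{B}}\|^2},
\]
and substituting the normalisation $\|\chi_{\vec{B}}\|^2=c(\vec{B})\|\chi_r\|^2$ from \cref{lem:explicit-basis} yields the second displayed formula. There is no real obstacle here: the only conceptual step is the harmonicity of the homogeneous parts, and everything else is a direct application of the basis provided by Srinivasan and Filmus.
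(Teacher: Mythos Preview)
Your proposal is correct and matches the paper's approach: the paper states the corollary as ``an immediate consequence of \cref{lem:explicit-basis}'' without further proof, and your three steps are exactly the routine verification one would supply. In particular, your observation that each homogeneous component $P^{=r}$ is itself harmonic (via the degree argument on $\sum_i \partial P/\partial x_i$) is the one detail worth spelling out, and you handle it correctly.
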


We identify $\on{Slice}(n,k)$ with $\binom{[n]}{k}$ via the correspondence $(\sigma_1,\ldots,\sigma_n)\mapsto \{i\in [n]\colon \sigma_i=1\}$. This allows us to define the $W$-vector $(W^1(P),\ldots,W^k(P))$ of a polynomial $P\in \mb{R}[x_1,\ldots,x_n]$ as in \cref{defn:W-vector}, by viewing $P$ as a function on $\binom{[n]}{k}$. We will apply \cref{cor:orthogonal-basis} to bound $\|P^{=r}\|$ in terms of $W^r(P)$.

\begin{lemma}\label{lem:level-r}
Let $n,k,r \in \mb{N}$ with $n\ge 2k\ge 2r$. For any harmonic multilinear polynomial $P\in\mb{R}[x_1,\ldots,x_n]$, 
\[
\|P^{=r}\|^2\le 4^r \binom{k}{r} (W^r(P))^2.
\]
\end{lemma}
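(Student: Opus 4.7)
The plan is to recast $(W^r(P))^2$ as a quadratic form on $\mc{H}'_{n,r}$, diagonalise it via $S_n$-symmetry, and bound the resulting scalar. First, I would evaluate $\chi_{\vec A,\vec B}(R)$ directly: each factor $x_{a_i}-x_{b_i}$ takes the value $\pm1$ exactly when one of $a_i,b_i$ lies in $R$ and is $0$ otherwise, so $\chi_{\vec A,\vec B}(R)=(-1)^{|R\cap\{b_1,\ldots,b_r\}|}$ on the set $\mc P(\vec A,\vec B):=\{R:|R\cap\{a_i,b_i\}|=1\ \forall i\}$ and vanishes elsewhere. Since $|\mc P(\vec A,\vec B)|=2^r\binom{n-2r}{k-r}$ gives $|\mc P(\vec A,\vec B)|/\binom nk=\|\chi_r\|^2$, substituting into \cref{defn:W-vector} yields
\[(W^r(P))^2=\|\chi_r\|^{-4}\,\mb E_{(\vec A,\vec B)}\langle P,\chi_{\vec A,\vec B}\rangle^2.\]
Each $\chi_{\vec A,\vec B}$ is itself harmonic and multilinear of degree exactly $r$ (the $\partial/\partial x_{a_i}$ and $\partial/\partial x_{b_i}$ contributions cancel factor-wise), so $\chi_{\vec A,\vec B}\in\mc H'_{n,r}$. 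By the mutual orthogonality of harmonic levels in \cref{lem:explicit-basis}, $\langle P,\chi_{\vec A,\vec B}\rangle=\langle P^{=r},\chi_{\vec A,\vec B}\rangle$, so I may henceforth work with $Q:=P^{=r}\in\mc H'_{n,r}$.

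Next I would introduce the positive-semidefinite operator $T\colon\mc H'_{n,r}\to\mc H'_{n,r}$ defined by $TQ=\mb E_{(\vec A,\vec B)}\langle Q,\chi_{\vec A,\vec B}\rangle\chi_{\vec A,\vec B}$, so that $\langle TQ,Q\rangle=\mb E\langle Q,\chi_{\vec A,\vec B}\rangle^2$. Because the law of a uniform $2r$-permutation is $S_n$-invariant and $\sigma\chi_{\vec A,\vec B}=\chi_{\sigma\vec A,\sigma\vec B}$ for every $\sigma\in S_n$, $T$ commutes with the $S_n$-action on $\mc H'_{n,r}$. Since $\mc H'_{n,r}$ is an irreducible $S_n$-module---the Specht module $S^{(n-r,r)}$ in the standard slice decomposition---Schur's lemma forces $T=\lambda_r I$ for a single scalar $\lambda_r>0$. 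Using $\|\chi_{\vec A,\vec B}\|^2=\|\chi_r\|^2$ independently of $(\vec A,\vec B)$ and taking traces,
\[\lambda_r\dim\mc H'_{n,r}=\on{tr}(T)=\mb E\|\chi_{\vec A,\vec B}\|^2=\|\chi_r\|^2,\]
so $\|P^{=r}\|^2=\langle TQ,Q\rangle/\lambda_r=\|\chi_r\|^2\dim\mc H'_{n,r}\cdot(W^r(P))^2$.

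To finish, I would bound $\|\chi_r\|^2\dim\mc H'_{n,r}$ by a direct estimate. The ballot-theorem count in the proof of \cref{claim:independent-set}, adapted to general $r$, together with \cref{lem:explicit-basis}, gives $\dim\mc H'_{n,r}=|\mc B_{n,r}|=\binom nr-\binom n{r-1}\le\binom nr$. Expanding the binomials then yields
\[\|\chi_r\|^2\binom nr=\frac{2^r\binom{n-2r}{k-r}\binom nr}{\binom nk}=2^r\binom kr\prod_{i=0}^{r-1}\frac{n-k-i}{n-r-i}\le 2^r\binom kr\le 4^r\binom kr,\]
since each factor in the product is $\le 1$ because $r\le k$. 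The conceptual obstacle is the appeal to irreducibility of $\mc H'_{n,r}$ as an $S_n$-module underpinning Schur's lemma; this is a classical fact about the Johnson scheme that I would cite rather than reprove. An elementary substitute avoiding representation theory would reduce to showing directly that the Gram-type matrix $\big(\mb E\,\langle\chi_{\vec B_1},\chi_{\vec A,\vec B}\rangle\langle\chi_{\vec A,\vec B},\chi_{\vec B_2}\rangle\big)_{\vec B_1,\vec B_2\in\mc B_{n,r}}$ is diagonal in the orthogonal basis $\{\chi_{\vec B}\}$, which appears combinatorially more intricate without improving the constant.
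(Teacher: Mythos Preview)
Your proof is correct and proceeds along a genuinely different route from the paper. The paper expands $\|P^{=r}\|^2$ in the orthogonal basis $\{\chi_{\vec B}\}$, applies Cauchy--Schwarz to pass from $\chi_{\vec B}=\sum_{\vec A\prec\vec B}\chi_{\vec A,\vec B}$ to the individual $\chi_{\vec A,\vec B}$, and then symmetrises over $S_n$ to turn the sum into the expectation defining $(W^r(P))^2$; the loss at the Cauchy--Schwarz step is what produces the factor $4^r$. You bypass all of this by observing that the averaging operator $T=\mb E_{(\vec A,\vec B)}\,\chi_{\vec A,\vec B}\otimes\chi_{\vec A,\vec B}$ is an $S_n$-intertwiner on the irreducible module $\mc H'_{n,r}\cong S^{(n-r,r)}$ and hence a scalar, then read off that scalar from the trace. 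This yields the exact identity $\|P^{=r}\|^2=\|\chi_r\|^2\dim\mc H'_{n,r}\cdot(W^r(P))^2$ and the sharper bound $2^r\binom kr$ in place of $4^r\binom kr$. The price is the appeal to irreducibility of $\mc H'_{n,r}$, which is classical but external to the paper, whereas the paper's argument is entirely self-contained using only the basis of \cref{lem:explicit-basis}. Your closing remark that an elementary substitute would amount to showing the Gram matrix is diagonal is essentially right; the paper sidesteps that by accepting the Cauchy--Schwarz loss instead.
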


We first derive \cref{prop:stability} from \cref{lem:level-r} and then prove \cref{lem:level-r}.

\begin{proof}[Proof of \cref{prop:stability} assuming \cref{lem:level-r}] Without loss of generality, assume $V=[n]$. By \cite[Theorem 3.6]{FM19}, there exists a harmonic multilinear polynomial $P\in \mb{R}[x_1,\ldots,x_n]$ of degree at most $k$ that agrees with $f$ on $\binom{[n]}{k}\equiv \on{Slice}(n,k)$. Define $\tilde{f}=\sum_{r=0}^{\ell-1}P^{=r}$. From \cref{cor:orthogonal-basis,lem:level-r}, it follows that $\|f-\tilde{f}\|^2=\|P-\tilde{f}\|^2=\sum_{r=\ell}^k\|P^{=r}\|^2 \le 5^k\eps^2$, and thus $\|f-\tilde{f}\| \le 5^{k/2}\eps$.
In particular, for all but at most $5^{k/2}\eps^{1/2}\binom{n}{k}$ sets $R\in \binom{[n]}{k}$, we have $|f(R)-\tilde{f}(R)| \le \eps^{1/2}$.

By \cref{cor:orthogonal-basis}, $\tilde{f}$ is a linear combination of polynomials $\chi_{\vec{A},\vec{B}}$, where $\vec{A}\prec\vec{B}\in \bigcup_{r=0}^{\ell-1}\mc{B}_{n,r}$. For disjoint $r$-permutations $\vec{A}=(a_1,\ldots,a_r)$ and $\vec{B}=(b_1,\ldots,b_r)$, define the function $\lambda_{\vec{A},\vec{B}}\colon \binom{[n]}{\ell-1}\to \mb{R}$ by $\lambda_{\vec{A},\vec{B}}(S)=\binom{k-r}{\ell-1-r}^{-1} \cdot (-1)^{|S\cap \{b_1,\ldots,b_r\}|}$ if $|S\cap \{a_i,b_i\}|=1$ for each $i\in [r]$, and $\lambda_{\vec{A},\vec{B}}(S)=0$ otherwise. For $R\in \binom{[n]}{k}$, we have $\chi_{\vec{A},\vec{B}}(R)=\sum_{S\subset R,\ |S|=\ell-1}\lambda_{\vec{A},\vec{B}}(S)$. Therefore, there exists $h\colon \binom{[n]}{\ell-1}\to \mb{R}$ such that for all $R\in \binom{[n]}{k}$, we have $\tilde{f}(R)=\sum_{S\subset R, \ |S|=\ell-1}h(S)$. It then follows from ``the easy direction'' of \cref{prop:algebraic-criterion} that $W^k(\tilde{f})=\cdots=W^{\ell}(\tilde{f})=0$. 
\end{proof}

\begin{proof}[Proof of \cref{lem:level-r}]
From \cref{cor:orthogonal-basis}, we know
\[
\|P^{=r}\|^2=\sum_{\vec{B} \in \mathcal{B}_{n,r}} \frac{\langle P, \chi_{\vec{B}} \rangle^2}{c(\vec{B}) \|\chi_r\|^2}.
\]
For $\vec{B}=(b_1,\ldots,b_r)\in \mc{B}_{n,r}$, let $d(\vec{B})$ be the number of $r$-permutations $\vec{A}=(a_1,\ldots,a_r)$ with $\vec{A}\prec \vec{B}$. Given 
$a_1,\ldots,a_{i-1}$, there are at most $b_i - 2i+1$ choices for $a_i$. Thus, $d(\vec{B}) \leq \prod_{i=1}^r (b_i-2i+1)\le \sqrt{2^r c(\vec{B})}$.    
Moreover, by the Cauchy-Schwarz inequality, 
$\langle P,\chi_{\vec{B}} \rangle^2=\left(\sum_{\vec{A}\prec \vec{B}}\langle P,\chi_{\vec{A},\vec{B}} \rangle\right)^2 \le d(\vec{B}) \sum_{\vec{A} \prec \vec{B}} \langle P,\chi_{\vec{A},\vec{B}} \rangle^2$.  
Therefore,
\[
\|P^{=r}\|^2 \le \sum_{\vec{B}\in \mc{B}_{n,r}}\frac{d(\vec{B})}{c(\vec{B}) \|\chi_r \|^2}\sum_{\vec{A}\prec \vec{B}}\langle P,\chi_{\vec{A},\vec{B}} \rangle^2
\le \frac{2^r}{\|\chi_r\|^2}\sum_{\vec{B}\in \mc{B}_{n,r}}\frac{1}{d(\vec{B})}\sum_{\vec{A}\prec \vec{B}}\langle P,\chi_{\vec{A},\vec{B}} \rangle^2.
\]
For any permutation $\pi\in \on{Sym}(n)$, applying $\pi$ to the ground set $[n]$ and using the inequality above yields
\[
\|P^{=r}\|^2  \le \frac{2^r}{\|\chi_r\|^2}\sum_{\vec{B}\in \mc{B}_{n,r}}\frac{1}{d(\vec{B})}\sum_{\vec{A}\prec \vec{B}}\langle P,\chi_{\pi(\vec{A}),\pi(\vec{B})} \rangle^2.
\]
Averaging over all $\pi \in \on{Sym}(n)$, we get 
\[
\|P^{=r}\|^2  \le \frac{2^r}{\|\chi_r\|^2}\sum_{\vec{B}\in \mc{B}_{n,r}}\frac{1}{d(\vec{B})}\sum_{\vec{A}\prec \vec{B}}\underset{\pi}{\mb{E}}\langle P,\chi_{\pi(\vec{A}),\pi(\vec{B})} \rangle^2.
\]
We observe that, for a fixed pair of $r$-permutations $\vec{A},\vec{B}$ satisfying $\vec{A}\prec \vec{B}$, the pair $\pi(\vec{A}),\pi(\vec{B})$ forms a uniform random $2r$-permutation of $[n]$ when $\pi\sim \on{Sym}(n)$. Hence $\mathbb{E}_{\pi}\langle P,\chi_{\pi(\vec{A}),\pi(\vec{B})}\rangle^2=\mb{E}_{\vec{C},\vec{D}}\langle P,
\chi_{\vec{C},\vec{D}}\rangle^2$, where $(\vec{C},\vec{D})$ is a uniformly random $2r$-permutation of $[n]$, so that $\frac{1}{d(\vec{B})}\sum_{\vec{A}\prec \vec{B}}\mb{E}_{\pi}\langle P,\chi_{\pi(\vec{A}),\pi(\vec{B})} \rangle^2=\mb{E}_{\vec{C},\vec{D}}\langle P,\chi_{\vec{C},\vec{D}} \rangle^2$. Moreover, it follows from the definitions of $\langle \cdot,\cdot\rangle$ and $W^r(\cdot)$ that $\mb{E}_{\vec{C},\vec{D}}\langle P,\chi_{\vec{C},\vec{D}} \rangle^2=\left(\frac{2^r\binom{n-2r}{k-r}}{\binom{n}{k}}\right)^2(W^r(P))^2$.
Therefore, the inequality above is simplified to
\[
\|P^{=r}\|^2 \le\frac{2^r}{\|\chi_r\|^2}\cdot|\mc{B}_{n,r}| \cdot \left(\frac{2^r\binom{n-2r}{k-r}}{\binom{n}{k}}\right)^2(W^r(P))^2.
\]
Substituting $\|\chi_r\|^2=\frac{2^r\binom{n-2r}{k-r}}{\binom{n}{k}}$ and $|\mc{B}_{n,r}|\le \binom{n}{r}$ into the inequality, we get $\|P^{=r}\|^2 \le 4^r\binom{k}{r}(W^r(P))^2$.
\end{proof}
We conclude this section with an $L_2$-version of a result by Bollob\'as and Scott \cite[Lemma 9]{BS15}, which may be of independent interest, although it is not required for the purposes of this paper.

\begin{corollary}\label{cor:BS-L2}
Let $n,k\in \mb{N}$ with $n\ge 2k$. For any function $f\colon \binom{[n]}{k}\rightarrow \mb{R}$,
\[
\max_{1\le r\le k} W^r(f)\ge 5^{-k/2}\cdot \sqrt{\on{Var}[f]}.
\]
\end{corollary}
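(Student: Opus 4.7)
The plan is to bootstrap from \cref{lem:level-r} together with the orthogonal decomposition furnished by \cref{cor:orthogonal-basis}. By \cite[Theorem 3.6]{FM19} (already invoked in the proof of \cref{prop:stability}), there is a unique harmonic multilinear polynomial $P\in\mb{R}[x_1,\ldots,x_n]$ of degree at most $k$ that agrees with $f$ on $\binom{[n]}{k}\equiv\on{Slice}(n,k)$, so that $W^r(f)=W^r(P)$ for every $r$.

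Next, I would invoke \cref{cor:orthogonal-basis} to write $P=P^{=0}+P^{=1}+\cdots+P^{=k}$ as a sum of mutually orthogonal components (with respect to the uniform measure on the slice). The degree-$0$ component $P^{=0}$ is the constant $\mb{E}[f]$, so the remaining terms carry precisely the variance of $f$:
\[
\on{Var}[f]=\|P\|^2-\|P^{=0}\|^2=\sum_{r=1}^{k}\|P^{=r}\|^2.
\]

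Now I apply \cref{lem:level-r} termwise to get $\|P^{=r}\|^2\le 4^r\binom{k}{r}(W^r(P))^2=4^r\binom{k}{r}(W^r(f))^2$. Bounding each $(W^r(f))^2$ by $\max_{1\le s\le k}(W^s(f))^2$ and using the binomial identity $\sum_{r=0}^{k}4^r\binom{k}{r}=5^k$ yields
\[
\on{Var}[f]\le\Big(\max_{1\le r\le k}W^r(f)\Big)^2\sum_{r=1}^{k}4^r\binom{k}{r}\le 5^{k}\Big(\max_{1\le r\le k}W^r(f)\Big)^2,
\]
which rearranges to the claimed inequality. There is no real obstacle here: once \cref{lem:level-r} is in hand, the bound follows by collecting Parseval-type identities and a single binomial sum; the only mild subtlety is justifying the identification $W^r(f)=W^r(P)$, which is immediate since $W^r$ depends only on the values of its argument on $\binom{[n]}{k}$.
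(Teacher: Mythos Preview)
Your proof is correct and essentially identical to the paper's own argument: represent $f$ by its harmonic multilinear extension $P$, decompose orthogonally via \cref{cor:orthogonal-basis} so that $\on{Var}[f]=\sum_{r=1}^k\|P^{=r}\|^2$, apply \cref{lem:level-r} termwise, and sum using $\sum_r 4^r\binom{k}{r}\le 5^k$. The only cosmetic difference is that you spell out $W^r(f)=W^r(P)$ and the binomial identity explicitly.
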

\begin{proof}
Let $P\in \mb{R}[x_1,\ldots,x_n]$ be the harmonic multilinear polynomial of degree at most $k$ that agrees with $f$ on $\on{Slice}(n,k)$. By \cref{cor:orthogonal-basis} and \cref{lem:level-r}, we obtain $P^{=0}=\mb{E}P$ and
\[
\on{Var}[P]=\sum_{r=1}^k\|P^{=r}\|^2\le \sum_{r=1}^k4^r \binom{k}{r} (W^r(P))^2\le 5^k \cdot \max_{1\le r \le k} (W^r(P))^2.
\]
Thus, $\max_{1\le r\le k} W^r(f)=\max_{1\le r \le k} W^r(P)\ge 5^{-k/2}\cdot\sqrt{\on{Var}[P]}=5^{-k/2}\cdot\sqrt{\on{Var}[f]}$.
\end{proof}

\section{Unavoidable patterns}\label{sec:robust-unavoidable}

The hypergraph Ramsey theorem guarantees that any sufficiently large $k$-uniform hypergraph contains a large homogeneous subhypergraph. In general, no structure beyond homogeneous subhypergraphs can be guaranteed, since the entire host hypergraph may be homogeneous. However, Fox and Sudakov \cite{FS08} show that when both edges and non-edges are well represented, a richer family of patterns necessarily emerges.

\begin{definition}\label{defn:pattern}
Let $k,t,s\in \mb{N}$ with $k,t\ge 2$. A $(k,t,s)$-pattern is a $k$-uniform hypergraph $F$ admitting a partition $V(F)=V_1 \cup \cdots \cup V_t$, where each part $V_i$ has size $s$, such that for every $S\in \binom{V(F)}{k}$, the indicator $\mathbf{1}_{F}(S)$ depends only on
$(|S \cap V_1|, \dots, |S \cap V_t|)$. We say $F$ is homogeneous if it is either empty or complete.
\end{definition}

The main result of this section is a robust version of Fox--Sudakov's theorem.

\begin{proposition}\label{prop:robust-unavoidable}
For $k,s\in \mb{N}$ and $\delta>0$, there exists $\gamma=\gamma_{\ref{prop:robust-unavoidable}}(k,s,\delta)>0$ such that the following holds for sufficiently large $n$. Let $G$ be a $k$-uniform hypergraph on $n$ vertices with $\min\{e(G),\binom{n}{k}-e(G)\}\ge \delta n^k$, and let $W\subseteq \binom{V(G)}{k}$ satisfy $|W| \le \gamma n^k$. Then $G$ contains a non-homogeneous $(k,k,s)$-pattern as an induced subhypergraph that avoids $W$.     
\end{proposition}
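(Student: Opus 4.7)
My plan is to deduce \cref{prop:robust-unavoidable} from the non-robust Fox--Sudakov theorem via supersaturation and double counting. The idea is first to upgrade Fox--Sudakov's existence statement into the quantitative claim that $G$ contains $\Omega_{k,s,\delta}(n^{ks})$ ordered tuples $(U_1,\ldots,U_k)$ of pairwise disjoint $s$-subsets of $V(G)$ for which $G[\bigcup_i U_i]$ is a non-homogeneous $(k,k,s)$-pattern with partition $(U_1,\ldots,U_k)$, and then to observe that each $T \in W$ lies inside $\bigcup_i U_i$ for only $O_{k,s}(n^{ks-k})$ such tuples, so that choosing $\gamma$ small forces a positive fraction of the pattern copies to avoid $W$.

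For the supersaturation step, let $m = m_0(k,s,\delta/2)$ be the Fox--Sudakov threshold ensuring a non-homogeneous induced $(k,k,s)$-pattern inside any $k$-uniform hypergraph on $m$ vertices whose edge and non-edge counts each exceed $(\delta/2)\binom{m}{k}$. For a uniformly random $V' \in \binom{V(G)}{m}$, the statistic $X := e(G[V'])$ has mean $\rho\binom{m}{k}$ with $\rho = e(G)/\binom{n}{k} \in [\delta', 1-\delta']$ for some $\delta' = \delta'(k,\delta) > 0$; a direct second-moment calculation shows that the relative variance $\on{Var}(X)/\mb{E}(X)^2$ is $O_k(1/m)$. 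Taking $m = m(k,\delta)$ sufficiently large, Chebyshev's inequality gives a fraction $c_0 = c_0(k,\delta) > 0$ of subsets $V'$ with $(\delta'/2)\binom{m}{k} \le X \le (1-\delta'/2)\binom{m}{k}$. Applying Fox--Sudakov to each such ``good'' $V'$ yields an ordered tuple $(U_1,\ldots,U_k)$ of pairwise disjoint $s$-subsets of $V'$ with $G[\bigcup_i U_i]$ a non-homogeneous $(k,k,s)$-pattern. Since every such tuple lies in exactly $\binom{n-ks}{m-ks}$ sets of size $m$, double counting gives $N \ge c_0 \binom{n}{m}/\binom{n-ks}{m-ks} = \Omega_{k,s,\delta}(n^{ks})$ good tuples.

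For the avoidance step, each $T \in W$ lies inside $\bigcup_i U_i$ in at most $k^k \cdot n^{ks-k}$ ordered tuples, since one first assigns the $k$ vertices of $T$ among the $k$ parts ($k^k$ ways) and then fills the remaining $ks-k$ slots from $V(G)\setminus T$. Summing over $T \in W$ bounds the total number of ``bad'' tuples by $k^k\gamma\, n^{ks}$, so choosing $\gamma = \gamma(k,s,\delta) > 0$ with $k^k\gamma < N/n^{ks}$ leaves strictly more good tuples than bad. Any surviving good tuple $(U_1,\ldots,U_k)$ furnishes the required non-homogeneous induced $(k,k,s)$-pattern avoiding $W$. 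The main obstacle is the supersaturation step: Fox--Sudakov produces only one pattern per hypergraph, and the density concentration for constant-size $V'$ must control both tails of $X$ simultaneously, which is why a one-sided Markov bound (giving probability only $\Omega(\delta)$ per tail) is insufficient and the $U$-statistic variance estimate is the cleanest tool.
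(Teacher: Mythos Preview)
Your argument is correct and follows the standard supersaturation--deletion template: second-moment concentration for the edge count of a random $m$-subset yields a constant fraction of $V'\in\binom{V(G)}{m}$ with balanced density, Fox--Sudakov inside each such $V'$ then gives $\Omega_{k,s,\delta}(n^{ks})$ ordered non-homogeneous $(k,k,s)$-pattern copies in $G$, and since each $T\in W$ spoils only $O_{k,s}(n^{ks-k})$ of these, a sufficiently small $\gamma$ leaves a surviving copy. (Two cosmetic points: the constant in $\on{Var}(X)/\mb{E}[X]^2=O(1/m)$ depends on $\delta$ through the lower bound $\rho\ge\delta'$, and the Fox--Sudakov threshold should be invoked with the density parameter matching the concentration window, i.e.\ $\delta'/2$ rather than $\delta/2$; neither affects the proof.)

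The paper takes a genuinely different route. Rather than counting patterns globally, it seeks a \emph{single} random $m$-subset $M$ that is simultaneously free of $W$-edges and balanced enough for Fox--Sudakov. The two-tail obstacle you identify---that one-sided Markov on $e(G[M])$ cannot control both edge and non-edge counts at once---is circumvented not by a second-moment bound but by an auxiliary structural lemma (\cref{lem:alternating-path}, imported from \cite{KST19}): $G$ contains $\Omega_{k,\delta}(n^{k+1})$ pairs $(S,T)$ of an edge and a non-edge with $|S\cap T|=k-1$, so the expected number of such pairs inside $M$ is large; since this count is also bounded above, reverse Markov gives positive probability that it is large, which forces both $e(G[M])$ and $\binom{m}{k}-e(G[M])$ to be large. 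Your approach is more self-contained (it avoids \cref{lem:alternating-path} entirely) and yields the stronger supersaturation statement ``$G$ contains $\Omega(n^{ks})$ pattern copies'' as a byproduct; the paper's approach trades the $U$-statistic variance computation for an extra lemma and works with a single good subset throughout.
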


To prove \cref{prop:robust-unavoidable}, we rely on several key ingredients. The first is Fox--Sudakov's theorem \cite[Theorem 4.2]{FS08}, stated below as a lemma. A complete proof can be found in \cite[Theorem A.1]{KST19}.

\begin{lemma}[\cite{FS08}]\label{lem:unavoidable}
    For $k,s\in \mb{N}$ and $\delta>0$, there exists $N=N_{\ref{lem:unavoidable}}(k,s,\delta)\in \mb{N}$ such that the following holds. Suppose $G$ is a $k$-uniform hypergraph on $n\ge N$ vertices with $\min\{e(G),\binom{n}{k}-e(G)\}\ge \delta n^k$. Then $G$ contains a non-homogeneous $(k,k,s)$-pattern as an induced subhypergraph. 
\end{lemma}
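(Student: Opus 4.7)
The plan is to prove \cref{lem:unavoidable} by combining a simple density-based seeding step with iterated applications of a multipartite hypergraph Ramsey theorem, one per intersection profile, while preserving a small witness of non-homogeneity.

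\emph{Phase 1 (seeding).} I first find a witness pair for non-homogeneity. Call a $(k-1)$-set $T\subseteq V(G)$ \emph{mixed} if its link $\{v\in V(G)\setminus T: T\cup\{v\}\in E(G)\}$ is neither empty nor $V(G)\setminus T$. Such a $T$ must exist: otherwise every $(k-1)$-set has either empty or full link, and a short argument using the fact that the relation ``differ in one vertex'' connects $\binom{V(G)}{k}$ forces $G$ to be either empty or complete, contradicting the density hypothesis. Fix a mixed $T$ together with $v$ in the link and $w$ outside it, so that $e:=T\cup\{v\}\in E(G)$ and $e':=T\cup\{w\}\notin E(G)$. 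Choose an equipartition $V(G)=V_1^{(0)}\sqcup\cdots\sqcup V_k^{(0)}$ with $T\cup\{v\}\subseteq V_1^{(0)}$ and $w\in V_2^{(0)}$. Under this partition, $e$ has intersection profile $\vec{\sigma}_1=(k,0,\dots,0)$ while $e'$ has profile $\vec{\sigma}_2=(k-1,1,0,\dots,0)$; crucially $\vec{\sigma}_1\neq \vec{\sigma}_2$.

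\emph{Phase 2 (iterated Ramsey).} Enumerate the $M=\binom{2k-1}{k-1}$ intersection profiles $\vec{\sigma}$ with $\sum_i\sigma_i=k$. For each $\vec{\sigma}$, I apply a multipartite Ramsey reduction: given $k$ parts of sufficient size, one can pass to subparts on which every $k$-set of profile $\vec{\sigma}$ takes a common value of $\mathbf{1}_G$. This multipartite statement follows from iterated applications of the classical hypergraph Ramsey theorem, handling one part $V_i$ at a time and tracking the relevant $\sigma_i$-uniform coloring. Composing the $M$ reductions starting from parts of size $\lfloor n/k\rfloor$ and choosing $n$ larger than the resulting tower-type Ramsey bound in $k$, $s$, $M$, I obtain parts $V_1,\dots,V_k$ of size $s$ each, forming a $(k,k,s)$-pattern on $V_1\cup\cdots\cup V_k$.

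\emph{Phase 3 (preserving non-homogeneity).} Throughout Phase~2, I insist that the preserved vertices $T\cup\{v,w\}$ remain in their assigned parts. For each Ramsey reduction I use a refined version that, for every subset $S\subseteq T\cup\{v,w\}$ and every profile $\vec{\sigma}$, separately monochromatizes the $k$-sets of profile $\vec{\sigma}$ that meet $T\cup\{v,w\}$ in exactly $S$. Since $|T\cup\{v,w\}|=k+1$ depends only on $k$, this refinement inflates the Ramsey bounds by a factor depending only on $k$ and stays finite. At the end, the induced pattern on $V_1\cup\cdots\cup V_k$ takes the value $\mathbf{1}_G(e)=1$ on profile $\vec{\sigma}_1$ (witnessed by $e$) and the value $\mathbf{1}_G(e')=0$ on profile $\vec{\sigma}_2$ (witnessed by $e'$), so it is non-homogeneous. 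The main obstacle is the book-keeping in Phase~3: one must verify that the refined Ramsey reduction consistently monochromatizes all profiles even after the preserved vertices are re-inserted, and that the values imposed by $e$ and $e'$ are not overwritten by subsequent reductions. The full argument, as carried out in \cite[Theorem~A.1]{KST19}, addresses this via a careful induction on the list of profiles combined with a supersaturated multipartite Ramsey statement.
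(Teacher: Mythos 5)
There is a genuine gap, and it sits exactly where you wave toward ``book-keeping'' in Phase~3. Your refined Ramsey step monochromatises, for each profile $\vec{\sigma}$ and each subset $S\subseteq T\cup\{v,w\}$, the $k$-sets of profile $\vec{\sigma}$ meeting the witness set in exactly $S$. But a $(k,k,s)$-pattern requires $\mathbf{1}_G$ on the final vertex set to depend on the profile \emph{alone}; nothing in your construction forces the colours obtained for different $S$ (with the same $\vec{\sigma}$) to coincide, so what you end up with need not be a pattern at all. Worse, your two witnesses $e=T\cup\{v\}$ and $e'=T\cup\{w\}$ are single $k$-sets lying entirely inside the fixed witness set $T\cup\{v,w\}$, so ``monochromatising their classes'' is vacuous: the values $\mathbf{1}_G(e)=1$ and $\mathbf{1}_G(e')=0$ say nothing about the other $k$-sets of profiles $\vec{\sigma}_1,\vec{\sigma}_2$ inside the final parts, and there is no mechanism to make those classes take the witness values. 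Hence non-homogeneity of the output is not certified.

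A quick sanity check shows the approach cannot be repaired by book-keeping alone: your argument never uses the quantitative hypothesis $\min\{e(G),\binom{n}{k}-e(G)\}\ge \delta n^k$ (Phase~1 only needs $G$ to be neither empty nor complete, and Phases~2--3 only need $n$ large in terms of $k,s$). If it were correct, it would prove that every non-homogeneous $k$-graph on sufficiently many vertices contains a non-homogeneous $(k,k,s)$-pattern, which is false: take $G$ with exactly one edge and $s=k+1$, since every non-homogeneous pattern with parts of size $s\ge k+1$ has at least two edges. The density hypothesis is what the real proofs exploit: Fox--Sudakov, and the write-up in \cite[Theorem A.1]{KST19} that this paper points to, first use supersaturation (in the spirit of \cref{lem:alternating-path}) to find \emph{many} $(k-1)$-sets with linearly many edge- and non-edge-extensions, so that the witness of non-homogeneity is abundant and can be embedded so as to survive inside the pattern's profile classes, rather than being carried along as one distinguished pair of $k$-sets as in your Phase~3. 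Your Phase~1 observation (a mixed $(k-1)$-set exists via connectivity of the Johnson graph) is correct but far too weak a starting point for this statement.
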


We also need the following result due to Kwan, Sudakov, and Tran \cite[Lemma A.3]{KST19}.

\begin{lemma}[\cite{KST19}]\label{lem:alternating-path}
For any $k\in \mb{N}$ and $\delta>0$, there exists $\alpha=\alpha_{\ref{lem:alternating-path}}(k,\delta)>0$ such that the following holds. Consider a partial red-blue colouring of the complete $k$-uniform hypergraph on $n$ vertices, where each colour class contains at least $\delta n^k$ edges and at most  $\alpha n^k$ edges are left uncoloured. Then there are at least $\alpha n^{k-1}$ $(k-1)$-subsets, each contained in at least $\alpha n$ red edges and $\alpha n$ blue edges.    
\end{lemma}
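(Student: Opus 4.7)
The plan is to argue by contradiction: suppose fewer than $\alpha n^{k-1}$ subsets $T\in\binom{[n]}{k-1}$ satisfy $r(T),b(T)\ge\alpha n$, where $r(T)$ and $b(T)$ count the red and blue $k$-edges containing $T$. I will show this is impossible once $\alpha=\alpha_{\ref{lem:alternating-path}}(k,\delta)$ is chosen sufficiently small compared with $\delta$.

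Partition $\binom{[n]}{k-1}=A\sqcup B\sqcup C\sqcup D$, where $D$ is the target set (assumed of size $<\alpha n^{k-1}$), $A=\{T:r(T)<\alpha n\le b(T)\}$, $B$ is defined symmetrically, and $C=\{T:\max(r(T),b(T))<\alpha n\}$. Two elementary bounds follow. Each $T\in C$ has more than $(n-k+1)/2$ uncoloured extensions, so $\sum_T u(T)\le k\alpha n^k$ forces $|C|\le 2k\alpha n^{k-1}$. Splitting $k|\mathcal R|=\sum_T r(T)$ according to the partition and using $r(T)\le\alpha n$ on $A\cup C$ together with $r(T)\le n$ elsewhere yields $|B|\ge(k\delta/2)n^{k-1}$ once $\alpha$ is small compared with $\delta$, and symmetrically $|A|\ge(k\delta/2)n^{k-1}$.

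The core step is to show that the set $M$ of \emph{mixed} $k$-edges---those containing at least one $(k-1)$-subset in $A$ and at least one in $B$---has size $|M|\ge c(\delta,k)\,n^k$ for some $c(\delta,k)>0$. Set $\mathcal F_A=\{e\in\binom{[n]}{k}:\text{every }(k-1)\text{-subset of }e\text{ lies in }A\}$ and define $\mathcal F_B$ analogously. Every \emph{clean} $k$-edge (one with no $(k-1)$-subset in $C\cup D$) belongs to $\mathcal F_A\sqcup\mathcal F_B\sqcup M$, and the number of clean edges is at least $\binom{n}{k}-(|C|+|D|)(n-k+1)\ge\binom{n}{k}-O(k\alpha)n^k$. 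Kruskal--Katona applied to $\mathcal F_A$ (whose lower shadow lies in $A$) gives $|\mathcal F_A|\le\binom{m_A}{k}$ with $\binom{m_A}{k-1}\le|A|$, and similarly for $B$. Writing $y_A=(m_A/n)^{k-1}$, $y_B=(m_B/n)^{k-1}$, and $s=\delta k!/2$, the constraints on $|A|,|B|$ translate into $y_A+y_B\le 1$, $y_A+y_B\ge 1-O(k\alpha(k-1)!)$, and $y_A,y_B\ge s$. By convexity of $t\mapsto t^{k/(k-1)}$, the maximum of $y_A^{k/(k-1)}+y_B^{k/(k-1)}$ over this region is attained at the corner $(s,1-s)$ and equals $s^{k/(k-1)}+(1-s)^{k/(k-1)}=1-\Theta(\delta)$ for small $s$. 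Hence $|\mathcal F_A|+|\mathcal F_B|\le(1-\Theta(\delta))\binom{n}{k}+O(k\alpha)n^k$, and therefore $|M|\ge\Omega(\delta)\,n^k$.

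To finish: any red edge in $M$ contains some $T\in A$ with fewer than $\alpha n$ red extensions, so a double-count bounds the number of red mixed edges by $|A|\cdot\alpha n=O(\alpha n^k)$, and likewise for blue. Consequently the number of uncoloured edges inside $M$ is at least $|M|-O(\alpha n^k)\ge\Omega(\delta)n^k/2$, which for $\alpha$ chosen as a sufficiently small multiple of $\delta$ exceeds the total uncoloured budget $\alpha n^k$---the required contradiction. The principal technical obstacle is the Kruskal--Katona/convexity step: one has to track the $O(k\alpha)$ slack in the constraints, and note that the apparent extremal configuration (compressed $A,B$ living on small disjoint ground sets) is exactly the one forbidden by the uncoloured bound, since such a configuration would force $|C|$ to be comparable to $\binom{n}{k-1}$.
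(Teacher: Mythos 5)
Your argument is correct, but note that the paper does not prove this lemma at all: it quotes it as \cite{KST19}, Lemma~A.3, and the argument there (as the authors' name ``alternating path'' for it suggests) is a path/double-counting argument that locates colour alternations between red and blue edges, rather than anything shadow-theoretic. Your route is genuinely different: you partition the $(k-1)$-sets by the sizes of their red/blue links into $A,B,C,D$, show $|C|+|D|=O_k(\alpha) n^{k-1}$ and $|A|,|B|\ge (k\delta/2)n^{k-1}$, then use the Lov\'asz form of Kruskal--Katona (valid since $\partial\mathcal F_A\subseteq A$, $\partial\mathcal F_B\subseteq B$) together with convexity of $t\mapsto t^{k/(k-1)}$ to force $\Omega_{k,\delta}(n^k)$ ``mixed'' $k$-sets, and finally observe that mixed $k$-sets are almost all uncoloured (at most $|A|\alpha n+|B|\alpha n\le 2\alpha n^k$ of them are coloured), contradicting the budget $\alpha n^k$. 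I checked the steps and they go through: the clean edges are covered by $\mathcal F_A\cup\mathcal F_B\cup M$; the convex objective on the triangle $\{y_A,y_B\ge s,\ y_A+y_B\le 1\}$ is maximised at $(s,1-s)$ with value $s^{k/(k-1)}+(1-s)^{k/(k-1)}\le 1-\Omega_k(s)$, and $s=k!\delta/2\le 1/2$ holds automatically because $\delta\le 1/k!$ whenever the hypothesis is non-vacuous. What your approach buys is a self-contained supersaturation proof with explicit polynomial dependence of $\alpha$ on $\delta$; what the cited proof buys is avoiding Kruskal--Katona. Three pieces of housekeeping you should make explicit: (a) define $m_A$ by $\binom{m_A}{k-1}=|A|$ (with ``$\le$'' as written, the lower bounds $y_A,y_B\ge s$ do not follow); (b) the constraint $y_A+y_B\le 1$ only holds up to a $1+o(1)$ factor when passing from binomial coefficients to powers, which is harmless since the gap you extract is a constant depending on $k,\delta$; (c) the argument is asymptotic in $n$, so for the statement as quoted one should dispose of bounded $n$ separately, e.g.\ by shrinking $\alpha$ so that no uncoloured edges are permitted and a single red/blue pair of $k$-sets sharing $k-1$ vertices (which exists by connectivity of the Johnson graph) already witnesses the conclusion. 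Your closing remark that the would-be extremal configuration (compressed $A,B$ on disjoint vertex sets) is exactly what the uncoloured budget forbids is the right sanity check.
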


Finally, we deduce \cref{prop:robust-unavoidable} from \cref{lem:unavoidable,lem:alternating-path} via a probabilistic argument.

\begin{proof}[Proof of \cref{prop:robust-unavoidable}]
Let $k, s \in \mathbb{N}$ and $\delta > 0$ be fixed. Set
\[
\alpha=\alpha_{\ref{lem:alternating-path}}(k,\delta/2), m= \max\left\{N_{\ref{lem:unavoidable}}(k,s,\alpha^3/k2^{k+2}), 2k\right\}, \enskip \text{and} \enskip \gamma = \min\left\{\alpha^3/16m^k,\delta/2\right\}.
\]
We refer to the elements of $W$ as white edges. 

Pick a set $M$ of $m$ vertices uniformly at random from $V(G)$. By the union bound, the probability that $M$ contains a white edge is at most
\[
\binom{m}{k}\cdot\frac{\gamma n^k}{\binom{n}{k}}\le \left(\frac{m}{n}\right)^k\cdot \gamma n^k= m^k\gamma.
\]

Let $X$ denote the random variable counting the number of pairs $(S,T)$ such that $S$ is an edge of $G[M]$, $T$ is a non-edge of $G[M]$, and $|S\cap T|=k-1$. For a given pair $S,T\in \binom{V(G)}{k}$ with $|S\cap T|=k-1$, the probability that $S\cup T$ is a subset of $M$ is at least $\frac{\binom{n-k-1}{m-k-1}}{\binom{n}{m}} \ge (m/2n)^{k+1}$ given that $n\ge m\ge 2k$. Moreover, by \cref{lem:alternating-path}, there are $\alpha^3 n^{k+1}$ pairs $S,T$ such that $S$ is an edge of $G$, $T$ is a non-edge of $G$, and $|S\cap T|=k-1$. Therefore, by the linearity of expectation, we have
\[
\mb{E} X \ge \alpha^3 n^{k+1} \cdot (m/2n)^{k+1}=(\alpha^3/2^{k+1}) m^{k+1}.
\]
Since $X\le m^2\binom{m}{k-1} < 2^{-k+2}m^{k+1}$, it follows that with probability greater than $\alpha^3/16$, we have 
\[
X\ge (\alpha^3/2^{k+2}) m^{k+1}.
\] 

Given that $m^k\gamma \le \alpha^3/16$, there exists a choice of $M$ for which $X\ge (\alpha^3/2^{k+2}) m^{k+1}$ and $M$ contains no white edges. Consequently, both the number of edges and the number of non-edges in the induced subhypergraph $G[M]$ are at least $X/(km)\ge (\alpha^3/k2^{k+2})m^k$. Applying \cref{lem:unavoidable} to $G[M]$, we conclude that $G[M]$ contains a non-homogeneous $(k,k,s)$-pattern.
\end{proof}

\section{Completion of the proof}\label{sec:proofs}

In this section, we prove \cref{thm:main}, establishing the lower bound $\on{bs}(k)\ge 3$ in \cref{sec:lower-bound} and the upper bound $\on{bs}(k)\le g(k)+2$ in \cref{sec:upper-bound}.

\subsection{Pairs with zero discrepancy}\label{sec:lower-bound}

The lower bound in \cref{thm:main} follows directly from the following.

\begin{proposition}\label{prop:counterexample} For any integer $k\ge 3$, there exist constants $\gamma=\gamma(k)\in (0,1/2)$ and $n_0=n_0(k)\in \mb{N}$ such that for all $n\ge n_0$, one can construct $k$-uniform hypergraphs $G$ and $H$ on $n$ vertices with densities in $(\gamma,1-\gamma)$, satisfying $\on{disc}(G,H)=0$. 
\end{proposition}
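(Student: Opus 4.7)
The plan proceeds in three steps: first, construct $G$ as a $(k{-}1)$-design via Keevash's theorem; second, construct $H$ so that $W^k(H)=0$ (equivalently, by \cref{prop:algebraic-criterion} with $\ell=k$, $\mathbf{1}_H(R)=\sum_{S\subset R,\ |S|=k-1}h(S)$ for some $h\colon \binom{V(H)}{k-1}\to\mathbb{R}$); third, verify $\on{disc}(G,H)=0$ via a double-counting argument.

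For the first step, fix $p\in(0,1)$ and apply Keevash's theorem to obtain, for all sufficiently large $n$ satisfying the usual divisibility conditions, a $(k{-}1)$-$(n,k,\lambda)$-design $G$ with $\lambda=p(n{-}k{+}1)$. Such $G$ has density exactly $p$ and, crucially, satisfies $\deg_G(S)=\lambda$ for every $S\in\binom{V(G)}{k-1}$. For the third step, given such $G$ and $H$, fix any $H'\cong H$ on $V(G)$; the representation of $\mathbf{1}_H$ pulls back through the isomorphism to $\mathbf{1}_{H'}(R)=\sum_{S\subset R}h'(S)$, and exchanging the order of summation gives
\[
|E(G)\cap E(H')| \;=\; \sum_S h'(S)\deg_G(S) \;=\; \lambda\sum_S h'(S).
\]
A double count $|E(H')|=(n{-}k{+}1)\sum_S h'(S)$ yields $\sum_S h'(S)=q\binom{n}{k}/(n{-}k{+}1)$, and substituting $\lambda=p(n{-}k{+}1)$ gives $|E(G)\cap E(H')|=pq\binom{n}{k}$ for every $H'$. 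Hence $\on{disc}(G,H)=0$.

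The second step (construction of $H$) is where the real work lies, and it splits by the parity of $k$. For $k$ odd I take $H$ to be the union of two vertex-disjoint complete $k$-graphs on a balanced bipartition $V(H)=A\cup B$; then $\mathbf{1}_H(R)=f(|R\cap A|)$ with $f(m)=\mathbf{1}[m\in\{0,k\}]$, and the identity $\sum_{m=0}^k(-1)^m\binom{k}{m}f(m)=1+(-1)^k=0$ shows $f$ is a polynomial of degree $\le k{-}1$ on $\{0,\ldots,k\}$. A short linear algebra computation then realizes $f$ as the image of a symmetric $h$ under the bipartite shadow map, giving the required representation of $\mathbf{1}_H$, and the density of $H$ is $2^{1-k}\in(0,1)$.

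The main obstacle I anticipate is the even-$k$ case of step two, where the identity $1+(-1)^k=2\neq 0$ shows that no nontrivial 0/1 profile function survives on a bipartition. I will therefore pass to a balanced tripartition $V(H)=A_1\cup A_2\cup A_3$ (or a finer multipartition when needed) and look for a symmetric 0/1 profile function in the image of the corresponding tripartite shadow map. For $k=4$, the function $\mathbf{1}[(|R\cap A_1|,|R\cap A_2|,|R\cap A_3|)\text{ is not a permutation of }(2,1,1)]$ can be directly verified to yield $H$ of density $\approx 5/9$ with $W^k(H)=0$ (by solving the small linear system that expresses the image of the shadow map on profile types), and I expect analogous profile-based constructions to work for every even $k$. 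The existence question reduces to a bounded-size integer linear feasibility problem in each case, which I intend to resolve by direct case analysis, invoking a finer $s$-part partition whenever the tripartite choice proves insufficient.
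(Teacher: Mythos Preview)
Your Steps~1 and~3 are correct and match the paper: the paper also takes $G$ to be an $(n,k,k{-}1,\lambda)$-design (citing Glock--K\"uhn--Lo--Osthus rather than Keevash), but it chooses $\lambda$ to be the largest multiple of $\prod_{i=0}^{k-2}\binom{k-i}{k-1-i}$ not exceeding $\eps n$, which makes the divisibility conditions automatic and delivers the construction for \emph{every} large $n$, not only those meeting extra congruences---you should tighten Step~1 the same way. Your Step~3 double count through the representation $\mathbf{1}_{H'}(R)=\sum_{S\subset R}h'(S)$ is exactly the paper's computation, just in different language.

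The genuine gap is Step~2 for even $k\ge 6$. You handle odd $k$ and $k=4$, but ``I expect analogous profile-based constructions to work for every even $k$'' followed by an open-ended case analysis over infinitely many $k$ is not a proof. In fact your $k=4$ tripartite example is precisely the complement of the paper's construction, which works uniformly for all $k\ge 3$: take a balanced partition $V=V_1\cup\cdots\cup V_{k-1}$ into $k{-}1$ parts and let $E(H)$ be the family of all $k$-sets meeting every part. In your framework this has $W^k(H)=0$ via the one-line certificate $h(S)=\tfrac12\cdot\mathbf{1}[\,S\text{ is a transversal}\,]$: an edge of $H$ has exactly one doubled part, so exactly two of its $(k{-}1)$-subsets are transversals, while a non-edge misses some $V_i$ and hence none of its subsets are. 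The paper itself skips the $W^k$ formulation entirely and just counts $|E(G')\cap E(H')|=\tfrac12\lambda\,|V_1|\cdots|V_{k-1}|$ directly (each of the $|V_1|\cdots|V_{k-1}|$ transversals lies in $\lambda$ edges of $G'$, all of which automatically lie in $E(H')$, and each such edge contains exactly two transversals). No parity split is needed.
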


\begin{proof}
An $(n, k, r, \lambda)$-block design is a $k$-uniform hypergraph on $[n]$ in which every set of $r$ vertices is contained in exactly $\lambda$ edges. Glock, Kühn, Lo and Osthus \cite[Theorem 1.1 and Corollary 4.14]{GKLO21} proved that for all $k, r \in \mathbb{N}$ with $k>r$, there exist $n_0 \in \mb{N}$ and $\eps > 0$ such that for all $n \geq n_0$ and $\lambda \in \mathbb{N}$ satisfying $\lambda \leq \eps n$ and 
\[
\binom{k-i}{r-i}\big| \lambda \binom{n-i}{k-i} \quad \text{for all } 0 \leq i < r,
\]
an $(n, k, r, \lambda)$-block design exists. We apply this result with $r=k-1$ and choose $\lambda$ to be the largest multiple of $\prod_{i=0}^{r-1} \binom{k-i}{r-i}$ not exceeding $\varepsilon n$. This guarantees that for some integer $\lambda \in \left[ \eps n/2, \eps n \right]$, an $(n, k, k-1, \lambda)$-block design exists.
Let $G$ denote this $k$-uniform hypergraph. 

Partition $[n]$ into $k-1$ disjoint parts $V_1,\ldots,V_{k-1}$ of sizes differing by at most one. Define the $k$-uniform hypergraph $H$ on the vertex set $[n]$ by including a set $R\in \binom{[n]}{k}$ in $E(H)$ if $R$ intersects every part $V_i$. We see that both $G$ and $H$ have moderate densities. Additionally, for every pair $G',H'$ with $G'\cong G$, $H'\cong H$ and $V(G')=V(H')$, we have
$|E(G')\cap E(H')|= \frac{1}{2}\lambda|V_1|\cdots |V_{k-1}|$. Hence $\on{disc}(G,H)=0$.
\end{proof}

\subsection{Proof of Theorem~\ref{thm:main}}\label{sec:upper-bound}

In this section, we combine the tools developed in \cref{sec:W-vectors,sec:robust-unavoidable} to prove the upper bound $\on{bs}(k) \le g(k) + 2$. 
Our argument relies on two key ingredients. The first is a reduction lemma that reduces the problem of proving the upper bound to analysing the $W$-vector of unavoidable patterns.

\begin{lemma}\label{lem:reduction}
For every pair of integers $k\ge \ell$ and every $\delta>0$, there exists $\eps=\eps_{\ref{lem:reduction}}(k,\ell,\delta)>0$ such that the following holds for sufficiently large $n$. Suppose any non-homogeneous $(k,k,k)$-pattern $F$ satisfies 
\[
\max_{\ell\le r \le k}W^r(F)>0.
\]
Then, for any $k$-uniform hypergraph $G$ on the vertex set $[n]$ with $\min\{e(G),\binom{n}{k}-e(G)\}\ge \delta n^k$, we have 
\[
\max_{\ell\le r \le k}W^r(G)\ge \eps.
\]
Consequently, $\on{bs}(k) \le k-\ell+2$.   
\end{lemma}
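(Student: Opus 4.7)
The plan is to argue by contradiction, combining the stability result \cref{prop:stability} with the robust Fox--Sudakov theorem \cref{prop:robust-unavoidable} to reduce the problem to a finite check on non-homogeneous $(k,k,k)$-patterns. A preliminary observation is that, up to relabelling the parts, there are only finitely many $(k,k,k)$-patterns: each such pattern is determined by a Boolean function on the set of tuples $(n_1,\dots,n_k)\in\{0,1,\dots,k\}^k$ with $\sum n_i = k$, a set of size $\binom{2k-1}{k-1}$. Consequently, the hypothesis that every non-homogeneous $(k,k,k)$-pattern $F$ satisfies $\max_{\ell \le r \le k} W^r(F) > 0$ yields a strictly positive constant
\[
\eta \;:=\; \min\Bigl\{\max_{\ell \le r \le k} W^r(F) : F \text{ is a non-homogeneous $(k,k,k)$-pattern}\Bigr\}
\]
depending only on $k$ and $\ell$.

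Fix $\eps>0$ small enough that $\eps^{1/2}<\eta$ and $5^{k/2}\eps^{1/2}/k!\le \gamma_{\ref{prop:robust-unavoidable}}(k,k,\delta)$. Suppose for contradiction that some $k$-uniform hypergraph $G$ on $[n]$ with $\min\{e(G),\binom{n}{k}-e(G)\}\ge \delta n^k$ satisfies $\max_{\ell\le r\le k}W^r(G)<\eps$. Applying \cref{prop:stability} to $\mathbf{1}_G$ produces a function $\tilde f\colon\binom{[n]}{k}\to\mb{R}$ with $W^r(\tilde f)=0$ for every $r\in[\ell,k]$ and with bad set $W:=\{R:|\mathbf{1}_G(R)-\tilde f(R)|>\eps^{1/2}\}$ satisfying $|W|\le 5^{k/2}\eps^{1/2}\binom{n}{k}\le \gamma_{\ref{prop:robust-unavoidable}}(k,k,\delta)\,n^k$. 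By \cref{prop:robust-unavoidable} (with $s=k$), $G$ then contains a non-homogeneous $(k,k,k)$-pattern $F$ as an induced subhypergraph whose $k$-subsets $\binom{V(F)}{k}$ are all disjoint from $W$.

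Two transfers close the loop. First, since $V(F)$ avoids $W$, $\|\mathbf{1}_F-\tilde f|_{V(F)}\|_\infty\le \eps^{1/2}$, so \cref{fact} gives $|W^r(F)-W^r(\tilde f|_{V(F)})|\le \eps^{1/2}$ for every $r$. Second, by \cref{prop:algebraic-criterion} the identity $\tilde f(R)=\sum_{S\subset R,\,|S|=\ell-1}h(S)$ holds globally for some $h$, hence persists for $R\subseteq V(F)$ with $h$ restricted to $\binom{V(F)}{\ell-1}$; the easy direction of \cref{prop:algebraic-criterion} then gives $W^r(\tilde f|_{V(F)})=0$ for every $r\in[\ell,k]$ (note $|V(F)|=k^2\ge 2k$). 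Combining, $\max_{\ell\le r\le k}W^r(F)\le \eps^{1/2}<\eta$, contradicting the definition of $\eta$.

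To deduce $\on{bs}(k)\le k-\ell+2$: given any $k-\ell+2$ hypergraphs of moderate density, the main statement assigns to each $G_i$ an index $r_i\in[\ell,k]$ with $W^{r_i}(G_i)\ge \eps$. The range $[\ell,k]$ has only $k-\ell+1$ elements, so pigeonhole supplies two hypergraphs $G_i,G_j$ sharing a common index $r$, and \cref{lem:Bollobas-Scott} then gives $\on{disc}(G_i,G_j)\ge c_{\ref{lem:Bollobas-Scott}}\,\eps^4\,n^{(k+1)/2}=\Omega(n^{(k+1)/2})$. The principal obstacle is ensuring that the approximating function $\tilde f|_{V(F)}$ obtained from the stability step genuinely inherits the vanishing-$W^r$ condition on the smaller ground set $V(F)$; this is precisely why we pass through the algebraic criterion (\cref{prop:algebraic-criterion}) in both directions, and why we need the robust pattern lemma rather than the bare Fox--Sudakov result, so that $V(F)$ can be chosen disjoint from the bad set $W$.
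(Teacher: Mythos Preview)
Your proof is correct and follows essentially the same approach as the paper: contradiction via \cref{prop:stability}, then \cref{prop:robust-unavoidable} to find a non-homogeneous pattern $F$ avoiding the bad set, then the algebraic criterion \cref{prop:algebraic-criterion} in both directions to transfer the vanishing of $W^r(\tilde f)$ to its restriction, and finally \cref{fact} to bound $W^r(F)$. The only cosmetic difference is that you make the finiteness of $(k,k,k)$-patterns explicit by introducing the constant $\eta$, whereas the paper simply writes ``for sufficiently small $\eps$ this implies $\max_{\ell\le r\le k}W^r(F)=0$''; your formulation is arguably clearer.
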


The second result verifies the assumption on the $W$-vector of unavoidable patterns in \cref{lem:reduction}.

\begin{lemma}\label{lem:pattern}
Let $k,t\ge 2$ and $F$ be a $(k,t,k)$-pattern with $W^k(F)=\cdots=W^{k-g(k)}(F)=0$.  Then $F$ is homogeneous.  
\end{lemma}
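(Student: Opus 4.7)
My plan is to prove \cref{lem:pattern} by induction on the number of parts $t$, where the base case $t = 2$ supplies the connection to the function $g(k)$ and the inductive step reduces $(k, t, k)$-patterns to patterns on fewer parts via \cref{rmk:uniqueness-induced}.

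For the \textbf{base case} $t = 2$, set $\alpha_j := \mathbf{1}_F(R)$ for $R$ with $|R \cap V_1| = j$, so $(\alpha_0, \ldots, \alpha_k) \in \{0,1\}^{k+1}$. Applying \cref{prop:algebraic-criterion} with $\ell = k - g(k)$ yields a unique $h \colon \binom{V(F)}{k - g(k) - 1} \to \mathbb{R}$ satisfying $\mathbf{1}_F(R) = \sum_{S \in \binom{R}{k - g(k) - 1}} h(S)$; uniqueness combined with the symmetry of $F$ under permutations within each $V_i$ forces $h(S) = \beta_i$ depending only on $i = |S \cap V_1|$. Counting $(k - g(k) - 1)$-subsets of $R$ by their intersection with $V_1$ gives
\[
\alpha_j = \sum_{i} \binom{j}{i}\binom{k-j}{k-g(k)-1-i}\beta_i.
\]
A Vandermonde-type identity evaluates $\sum_{j} (-1)^j \binom{r}{j}\binom{j}{i}\binom{k-j}{k-g(k)-1-i}$ as $(-1)^i \binom{r}{i}\binom{k-r}{k-g(k)-1-r}$, which vanishes for every $r \in [k - g(k), k]$. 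Hence $\sum_{j=0}^r (-1)^j \binom{r}{j}\alpha_j = 0$ throughout this range, which is exactly the system in \cref{defn:gap} with $m = k$. The defining property of $g(k)$ forces $\vec{\alpha}$ to be $\vec{0}$ or $\vec{1}$, so $F$ is homogeneous.

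For the \textbf{inductive step} $t \geq 3$, consider for each $i_0 \in [t]$ the restriction $F_{i_0} := F[\bigcup_{i \neq i_0} V_i]$, a $(k, t-1, k)$-pattern on $(t-1)k \geq 2k$ vertices, which inherits the $W$-vanishing conditions by \cref{rmk:uniqueness-induced} and is therefore homogeneous with some value $c_{i_0}$ by induction. Any two such $F_{i_0}, F_{i_1}$ assign the constant values $c_{i_0}$ and $c_{i_1}$ to every $k$-subset of their common vertex set $\bigcup_{j \neq i_0, i_1} V_j$ (which has $(t-2)k \geq k$ vertices, hence contains a $k$-subset), forcing a common value $c$. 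Thus $\alpha(\vec{a}) = c$ for every $\vec{a}$ on the boundary of the simplex $\Delta := \{\vec{a} \in \mb{Z}_{\geq 0}^t : \sum_i a_i = k\}$, which already proves the lemma when $t \geq k+1$ by pigeonhole. For smaller $t$, applying \cref{prop:algebraic-criterion} to $F$ itself and using the uniqueness of $h_{i_0}$ for each homogeneous $F_{i_0}$ forces the value $\beta(\vec{b}) := h(S)$ (on subsets $S$ of type $\vec{b}$) to equal $c/\binom{k}{k-g(k)-1}$ whenever $\vec{b}$ has a zero coordinate. When $t > k-g(k)-1$ every such $\vec{b}$ has a zero coordinate, so $\beta$ is globally constant and Vandermonde yields $\alpha \equiv c$.

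The \textbf{main obstacle} is the residual case $2 \leq t \leq k - g(k) - 1$, where $\vec{b}$'s of full support exist and their $\beta$-values are not pinned down by boundary data. Since $\alpha - c$ vanishes on the boundary of $\Delta$, one can factor $\alpha(\vec{a}) - c = a_1 a_2 \cdots a_t \cdot Q(\vec{a})$ with $Q$ a polynomial of degree at most $k - g(k) - 1 - t$, and the constraint $\alpha \in \{0,1\}$ forces $a_1 \cdots a_t \cdot Q(\vec{a}) \in \{-c, 1-c\}$ at every interior integer point of $\Delta$. To finish, I plan to slice $\Delta$ by fixing $t - 2$ coordinates at values summing to $A$ and apply the bipartite alternating-sum computation of the base case to $\phi(a_i) := \alpha(a_i, k - A - a_i, \vec{a}_{-ij})$. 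A Vandermonde calculation parallel to the base case gives $\sum_{a_i = 0}^{r} (-1)^{a_i}\binom{r}{a_i}\phi(a_i) = 0$ for $r \in [k - g(k), k - A]$, a tail of the system in \cref{defn:gap} with $m = k - A$. The delicate point is that for $A > 0$ this range is shorter than the full range $[m - g(k), m]$ required by \cref{defn:gap}, so the missing equations must be recovered by combining slice equations across different choices of $(i, j)$ and different $\vec{a}_{-ij}$ together with the $\{0,1\}$-valuedness of $\alpha$. This is where I expect the bulk of the technical work to lie.
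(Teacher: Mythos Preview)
Your base case $t = 2$ is essentially the paper's \cref{lem:bipartite-pattern}, and your handling of the inductive step for $t > k - g(k) - 1$ is sound. The genuine gap is exactly where you flag it: the residual range $3 \le t \le k - g(k) - 1$.

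Your slicing plan there does not close. Fixing $t-2$ coordinates summing to $A \ge 1$ and running the bipartite computation yields the equations $\sum_{j} (-1)^{j}\binom{r}{j}\phi(j) = 0$ only for $r \in [k - g(k),\, k - A]$, whereas \cref{defn:gap} with $m = k - A$ requires the full range $r \in [k - A - g(k),\, k - A]$; you are short by exactly $A$ equations, and ``combining slices across different $(i,j)$ together with $\{0,1\}$-valuedness'' is not a concrete step---it is not clear it can be made one without an additional idea.

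The paper supplies that idea. After passing to the complement so that $c = 0$ (hence, as you already noted, $\beta(\vec b) = 0$ whenever $\vec b$ has a zero coordinate), let $\ell_0 \ge 1$ be the least integer for which some $S_0$ with $\min_i |S_0 \cap V_i| = \ell_0$ has $h(S_0) \ne 0$; say the minimum is attained in coordinate $1$, and set $I = S_0 \cap V_1$. Now do \emph{not} slice by fixing coordinates of $\vec a$; instead, form the link pattern $F^*$ on $U_2 \cup \cdots \cup U_t$ (each $U_i \subset V_i$ of size $k - \ell_0$) by declaring $R^* \in E(F^*)$ iff $R^* \cup I \in E(F)$. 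The key point is that $h^*(S^*) := h(S^* \cup I)$ gives an \emph{exact} $(\ell - \ell_0 - 1)$-representation of $\mbf{1}_{F^*}$, because every term $h(S)$ with $|S \cap V_1| < \ell_0$ vanishes by the minimality of $\ell_0$. Thus, via \cref{prop:algebraic-criterion}, $W^{k-\ell_0}(F^*) = \cdots = W^{\ell - \ell_0}(F^*) = 0$, and since $g$ is non-decreasing, $\ell - \ell_0 \le (k - \ell_0) - g(k - \ell_0)$. Induction on $t$ makes $F^*$ homogeneous, hence empty (it has a non-edge inside $U_2$), hence $h^* \equiv 0$ by \cref{rmk:uniqueness-induced}, contradicting $h(S_0) = h^*(S_0 \setminus V_1) \ne 0$. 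The difference from your plan is that by cutting at the critical level $\ell_0$ on the $h$-side rather than at an arbitrary level $A$ on the $\alpha$-side, one inherits the \emph{full} algebraic criterion for the smaller pattern rather than a truncated tail of equations.
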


\begin{proof}[Proof of \cref{thm:main} assuming \cref{lem:reduction,lem:pattern}]
By \cref{lem:pattern}, any non-homogeneous $(k,k,k)$-pattern $F$ satisfies $\max_{\ell\le r \le k}W^r(F)>0$.  Applying \cref{lem:reduction} with $\ell=k-g(k)$ yields $\on{bs}(k)\le g(k)+2$. 
\end{proof}

In the rest of this section, we prove \cref{lem:reduction,lem:pattern}. We start with \cref{lem:reduction}.

\begin{proof}[Proof \cref{lem:reduction}]
Let $\gamma=\gamma_{\ref{prop:robust-unavoidable}}(k,k,\delta)>0$, and choose $\eps>0$ sufficiently small with respect to $k$ and $\ell$. Suppose, for contradiction, that $\max_{\ell\le r \le k}W^r(G) \le \eps$. By \cref{prop:stability}, there exists a function $\tilde{f}\colon \binom{[n]}{k}\to \mb{R}$ with the following properties:
\begin{itemize}
    \item[\rm (i)] For all but at most $5^{k/2}\eps^{1/2}\binom{n}{k}$ sets $R\in \binom{[n]}{k}$, we have $|\mbf{1}_G(R)-\tilde{f}(R)| \le \eps^{1/2}$;
    \item[\rm (ii)] $W^k(\tilde{f})=\cdots=W^{\ell}(\tilde{f})=0$.
\end{itemize}   
Let $W$ be the set of $R\in \binom{[n]}{k}$ with $|\mbf{1}_G(R)-\tilde{f}(R)| > \eps^{1/2}$. Property (i) implies $|W|\le 5^{k/2}\eps^{1/2}\binom{n}{k} \le \gamma n^k$. Applying \cref{prop:robust-unavoidable} to $G$ and $W$, we find that $G$ contains a non-homogeneous $(k,k,k)$-pattern $F$ as an induced subhypergraph such that $E(F)\cap W=\emptyset$. Since $F$ avoids $W$, it follows from the definition of $W$ that $|\mbf{1}_F(R)-\tilde{f}(R)| \le \eps^{1/2}$ for all $R\in \binom{V(F)}{k}$. Let $\bar{f}$ denote the restriction of $\tilde{f}$ to $\binom{V(F)}{k}$. Then $\|\mbf{1}_F-\bar{f}\|_{\infty} \le \eps^{1/2}$. By property (ii) and \cref{prop:algebraic-criterion}, we have $W^k(\bar{f})=\cdots=W^{\ell}(\bar{f})=0$. Hence, by \cref{fact},
\[
\max_{\ell\le r\le k}W^r(F)=\max_{\ell\le r\le k}W^r(\mbf{1}_F)\le \max_{\ell\le r\le k}W^r(\bar{f})+\|\mbf{1}_F-\bar{f}\|_{\infty}\le \eps^{1/2}.
\]
For sufficiently small $\eps$, this implies $\max_{\ell\le r\le k}W^r(F)=0$, contradicting the assumption. Therefore, $\max_{\ell\le r \le k}W^r(G)\ge \eps$.

To prove the second part, consider any collection of $n$-vertex $k$-uniform hypergraphs $G_1,\ldots,G_{k-\ell+2}$, each satisfying $\min\{e(G_i),\binom{n}{k}-e(G_i)\}\ge \delta n^k$. By the first part of the lemma, for each $i\in [k-\ell+2]$, 
there exists some $r(i)\in [\ell,k]$ such that $W^{r(i)}(G_i)\ge \eps$. The pigeonhole principle then guarantees a pair $i<j$ with $r(i)=r(j)$; denote this common value by $r$. Then \cref{lem:Bollobas-Scott} implies 
\[
\on{disc}(G_i,G_j)\ge c_{\ref{lem:Bollobas-Scott}}n^{(k+1)/2} (W^r(G_i)W^r(G_j))^2=\Omega_k(n^{(k+1)/2}),
\]
which shows $\on{bs}(k)\le k-\ell+2$.
\end{proof}

We will prove \cref{lem:pattern} by induction on $t$, the number of parts in an unavoidable pattern. The base case $t=2$ corresponds to the following lemma.

\begin{lemma}\label{lem:bipartite-pattern}
Let $k\ge 2$ be an integer and $F$ be a $(k,2,k)$-pattern with $W^k(F)=\cdots=W^{k-g(k)}(F)=0$. Then $F$ is homogeneous.
\end{lemma}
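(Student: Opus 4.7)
My plan is to use \cref{prop:algebraic-criterion} to reduce the hypothesis $W^{k-g(k)}(F)=\cdots=W^k(F)=0$ to a linear constraint on the sequence $\vec{\alpha}\in \{0,1\}^{k+1}$ defined by $\alpha_i := \mathbf{1}_F(R)$ for any $R\in \binom{V(F)}{k}$ with $|R\cap V_1|=i$, and then identify this constraint with the defining system of $g(k)$ in \cref{defn:gap}.

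Set $\ell := k-g(k)\ge 1$. Applying \cref{prop:algebraic-criterion} produces a \emph{unique} function $h\colon\binom{V(F)}{\ell-1}\to \mb{R}$ such that $\mathbf{1}_F(R)=\sum_{S\subset R,\,|S|=\ell-1}h(S)$ for every $R\in \binom{V(F)}{k}$. Because $\mathbf{1}_F$ is invariant under $G:=\on{Sym}(V_1)\times\on{Sym}(V_2)$, applying any $\pi\in G$ to this identity and invoking uniqueness of $h$ forces $h$ to be $G$-invariant as well. Hence $h(S)$ depends only on $j:=|S\cap V_1|$; write $h(S)=\beta_j$ for some $\beta_0,\ldots,\beta_{\ell-1}\in \mb{R}$. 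Counting $(\ell-1)$-subsets of $R$ by their intersection with $V_1$ then yields
\[
\alpha_i=\sum_{j=0}^{\ell-1}\binom{i}{j}\binom{k-i}{\ell-1-j}\beta_j,\qquad i=0,1,\ldots,k.
\]
Each summand is a polynomial in $i$ of degree $j+(\ell-1-j)=\ell-1$, so $(\alpha_i)_{i=0}^k$ is the restriction to $\{0,1,\ldots,k\}$ of a polynomial of degree at most $k-g(k)-1$.

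To conclude, I would invoke Newton's forward-difference formula: a sequence $(\alpha_0,\ldots,\alpha_k)\in \mb{R}^{k+1}$ is a restriction of a polynomial of degree at most $k-g(k)-1$ if and only if its top $g(k)+1$ Newton coefficients at the origin vanish, i.e.,
\[
\sum_{i=0}^{r}(-1)^i\binom{r}{i}\alpha_i=0,\qquad r=k-g(k),\ldots,k.
\]
This is precisely the system in \cref{defn:gap} with $m=k$ and $g=g(k)$, so by the defining property of $g(k)$ the only $\vec{\alpha}\in \{0,1\}^{k+1}$ satisfying it are $(0,\ldots,0)$ and $(1,\ldots,1)$. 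Hence $F$ is either empty or complete, i.e., homogeneous.

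The only delicate step is the symmetrisation of $h$ via the uniqueness clause of \cref{prop:algebraic-criterion}: this is the conceptual bridge that transforms the representation of $\mathbf{1}_F$ into one manifestly symmetric under $G$, thereby cutting the problem down to a one-parameter family $(\alpha_i)_{i=0}^k$. Everything else amounts to standard binomial bookkeeping followed by a direct appeal to \cref{defn:gap}.
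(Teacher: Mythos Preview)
Your proof is correct and takes essentially the same approach as the paper: both symmetrise $h$ via the uniqueness clause of \cref{prop:algebraic-criterion}, write $\alpha_i=\sum_j\binom{i}{j}\binom{k-i}{\ell-1-j}\beta_j$, and then feed the resulting constraints into \cref{defn:gap} with $m=k$. Your observation that this expression is a polynomial in $i$ of degree at most $\ell-1$ (so its $r$-th finite differences vanish for $r\ge\ell$) is a slightly slicker replacement for the paper's explicit generating-function computation of the same identity.
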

\begin{proof} Let $V=V_1\cup V_2$ be the vertex partition of $F$, and let $\ell=k-g(k)$. 

\begin{claim}\label{claim:pattern-function}
There exists $h \colon \binom{V}{\ell-1} \to \mb{R}$ such that for every $R \in \binom{V}{k}$, we have
$\mbf{1}_F(R) = \sum_{S \subset R} h(S)$,
with $h(S)$ depending only on $(|S\cap V_1|,|S\cap V_2|)$.  \end{claim}
\begin{poc}
By \cref{prop:algebraic-criterion}, there exists a unique function $h \colon \binom{V}{\ell-1} \to \mb{R}$ such that for all $R \in \binom{V}{k}$, we have $\mbf{1}_F(R) = \sum_{S \subset R} h(S)$.
Consider a permutation $\pi$ of $V$ that preserves each part $V_i$. Let $h_{\pi}$ be the function on $\binom{V}{\ell-1}$ given by $h_{\pi}(S)=h(\pi(S))$. For every $R\in \binom{V}{k}$, we have 
\[
\sum_{S \subset R} h_{\pi}(S)=\sum_{S \subset R} h(\pi(S))=\mbf{1}_{F}(\pi(R))=\mbf{1}_F(R).
\]
By the uniqueness of $h$, we obtain $h_{\pi}=h$ for any such $\pi$. Hence $h(S)$ depends only on $(|S\cap V_1|,|S\cap V_2|)$.    
\end{poc}

Let $\vec{\alpha}=(\alpha_0,\ldots,\alpha_k)$ denote a 0-1 vector where $\alpha_i=1$ if $R\in E(F)$ for every set $R\in \binom{V}{k}$ with $(|R\cap V_1|,|R\cap V_2|)=(i,k-i)$, and $\alpha_i=0$ otherwise.

\begin{claim}\label{claim:linear-system} For every integer $\ell \le r\le k$, we have $\sum_{i=0}^r(-1)^i\binom{r}{i}\alpha_i=0$.  
\end{claim}
\begin{poc} Let $h\colon \binom{V(F)}{\ell-1}\to \mb{R}$ be the function given by \cref{claim:pattern-function}. For $0\le j\le \ell-1$, define $\beta_j=h(S)$, where $S$ is any set with $(|S\cap V_1|,|S\cap V_2|)=(j,\ell-1-j)$. We then have $\alpha_i=\sum_{j}\binom{i}{j}\binom{k-i}{\ell-1-j}\beta_j$. Substituting this expression into the sum, we get \[ \sum_{i=0}^r(-1)^i\binom{r}{i}\alpha_i=\sum_j \left( \sum_i (-1)^i\binom{r}{i} \binom{i}{j}\binom{k-i}{\ell-1-j}\right)\beta_j. \] Using the identity $\binom{r}{i}\binom{i}{j}=\binom{r}{j}\binom{r-j}{i-j}$, we rewrite the sum as \[ \sum_j \left(\sum_{i}(-1)^{i-j}\binom{r-j}{i-j}\binom{k-i}{\ell-1-j}\right) (-1)^j\binom{r}{j}\beta_j.\]By a change of variables $i^*=i-j$, $r^*=r-j$, $k^*=k-j$, and $\ell^*=\ell-j$, we transform the inner sum to \[ \sum_{i}(-1)^{i-j}\binom{r-j}{i-j}\binom{k-i}{\ell-1-j}=\sum_{i^*}(-1)^{i^*}\binom{r^*}{i^*}\binom{k^*-i^*}{\ell^*-1}. \] Notice that $\binom{r^*}{i^*}$ is the coefficient of $x^{i^*}$ in $(1+x)^{r^*}$, and $(-1)^{i^*}\binom{k^*-i^*}{\ell^*-1}$ is the coefficients of $x^{k^*-\ell^*+1-i^*}$ in $\frac{(-1)^{k^*-\ell^*+1}}{(1+x)^{\ell^*}}$. Thus, the inner sum corresponds to the coefficient of $x^{k^*-\ell^*+1}$ in $(-1)^{k^*-\ell^*+1}(1+x)^{r^*-\ell^*}$, which is 0 since $k^*-\ell^*+1>r^*-\ell^* \ge 0$. Therefore, we conclude $\sum_{i=0}^r(-1)^i\binom{r}{i}\alpha_i=0$. \end{poc} 

Recalling $g(k)$ from \cref{defn:gap}, we apply \cref{claim:linear-system} to conclude either $\vec{\alpha}=(0,\ldots,0)$ or $\vec{\alpha}=(1,\ldots,1)$. Hence the pattern $F$ is homogeneous. 
\end{proof}

We conclude this section with the proof of \cref{lem:pattern}.

\begin{proof}[Proof of \cref{lem:pattern}]
We prove \cref{lem:pattern} by induction on $t$. For the base case $t=2$, the conclusion follows from \cref{lem:bipartite-pattern}. Now assume $t\ge 3$, and suppose the statement holds for all $2\le t'<t$.

Let $\ell=k-g(k)$, and let $V=V_1\cup \cdots\cup V_t$ be the vertex partition of $F$. Since $W^k(F)=\cdots=W^{\ell}(F)=0$, by \cref{prop:algebraic-criterion}, there exists a function $h\colon\binom{V}{\ell-1}\to\mathbb{R}$ such that for every $R \in\binom{V}{k}$,
\begin{equation}\label{eq:combinatorics-algebra}
 \mbf{1}_F(R)=\sum_{S\subset R}h(S).   
\end{equation} 
If $h$ is identically zero, then \cref{eq:combinatorics-algebra} implies $F$ is empty, and we are done. Otherwise, there exists an integer $\ell_0 \ge 0$ such that $h(S)=0$ whenever $\min_{1\le i \le t} |S\cap V_i|<\ell_0$, but $h(S_0)\ne 0$ for some $S_0\in \binom{V}{\ell-1}$ with $\min_{1\le i\le t}|S_0\cap V_i|=\ell_0$. Without loss of generality, assume $|S_0\cap V_1|=\ell_0$, and let $I=S_0\cap V_1$.

For each $i\in [t]$, the induced subhypergraph $F[V\setminus V_i]$ is a $(k,t-1,k)$-pattern. Recalling \cref{rmk:uniqueness-induced}, we get $W^k(F[V\setminus V_i])=\cdots=W^{\ell}(F[V\setminus V_i])=0$. By the induction hypothesis, each $F[V \setminus V_i]$ is homogeneous, and hence, the $F[V \setminus V_i]$ are either all empty or all complete. Taking the complement of $F$ if necessary, we may assume they are all empty. 

Since $F[V\setminus V_i]$ is empty, it follows from \cref{rmk:uniqueness-induced} that the restriction of $h$ to each $\binom{V\setminus V_i}{\ell-1}$ is identically zero, which forces $\ell_0>0$. Consequently, $R^*\cup I \notin E(F)$ for every $R^*\in \binom{V_2}{k-\ell_0}$.

For $2\le i \le t$, choose a subset $U_i\subseteq V_i$ of size $k-\ell_0$. Let $F^*$ be the $(k-\ell_0,t-1,k-\ell_0)$-pattern with vertex set $U=U_2\cup \cdots\cup U_t$, where $R^* \in \binom{U}{k-\ell_0}$ is an edge of $F^*$ if only if $R^*\cup I \in E(F)$. 
As $R^*\cup I \notin E(F)$ for every $R^*\in \binom{U_2}{k-\ell_0}$, we have $R^*\notin E(F^*)$ for all such $R^*$.

Now define a function $h^*\colon \binom{U}{\ell-\ell_0-1}\to \mb{R}$ by setting $h^*(S^*)=h(S^*\cup I)$. We then have
\begin{align*}
\mbf{1}_{F^*}(R^*)=\mbf{1}_F(R^*\cup I)&\overset{\eqref{eq:combinatorics-algebra}}{=}\sum_{S\subset R^*\cup I} h(S)\\
&=\sum_{S\cap V_1=I}h(S)=\sum_{S^*\subset R^*}h^*(S^*),
\end{align*}
where in the second line we use the fact that $h(S)=0$ whenever $|S\cap V_1|<|I|=\ell_0$. By \cref{prop:algebraic-criterion}, $W^{k-\ell_0}(F^*)=\cdots=W^{\ell-\ell_0}(F^*)=0$. 

Since $g$ is non-decreasing, we have $\ell-\ell_0=k-g(k)-\ell_0\le (k-\ell_0)-g(k-\ell_0)$. By the induction hypothesis, $F^*$ is homogeneous. Since $R^*\notin E(F^*)$ for every $R^*\in \binom{U_2}{k-\ell_0}$, it follows that $F^*$ must be empty. By \cref{rmk:uniqueness-induced}, we conclude $h^*\equiv 0$. Therefore, we have $0=h^*(S_0\setminus V_1)=h(S_0)\ne 0$, a contradiction. 
\end{proof}

\bibliographystyle{plain} 
\bibliography{references} 

\appendix
\section{A number-theoretic result}

\begin{proof}[Proof of part (ii) of \cref{thm:Gathen-Roche}] Let $g=g(k)$. Fix $m\in [2,k]$, and consider the following system of linear equations
\[
\sum_{0\le i\le r}(-1)^i\binom{r}{i}\alpha_i=0 \quad \text{for all } r=m-g,\ldots,m,
\]
where $\vec{\alpha}=(\alpha_0,\ldots,\alpha_{m})\in \{0,1\}^{m+1}$. From \cref{defn:gap}, there exists a prime $p$ such that $m-g\le p-1\le m$. For $0\le i \le p-1$, we have $\binom{p-1}{i}=\frac{(p-1)(p-2)\cdots (p-i)}{i!}\equiv (-1)^i \mod{p}$. Thus, 
\[
0=\sum_{i=0}^{p-1}(-1)^i\binom{p-1}{i}\alpha_i\equiv \sum_{i=0}^{p-1}\alpha_i \mod{p}.
\]
Since $\alpha_i\in \{0,1\}$ for all $0\le i\le p-1$, this forces $\alpha_0=\cdots=\alpha_{p-1}$. We then solve $\alpha_p,\ldots,\alpha_m$ sequentially using the equations $\sum_{0\le i\le r}(-1)^i\binom{r}{i}\alpha_i=0$ for each $r=p,\ldots,m$. We conclude $\alpha_0=\cdots=\alpha_m$, which means the system has only two solutions $\vec{\alpha}=(0,\ldots,0)$ and $\vec{\alpha}=(1,\ldots,1)$. 
\end{proof}

\end{document}